\title[Automorphisms of the Lie Algebra of Vector Fields on Affine $n$-Space]
{Automorphisms of the Lie Algebra of Vector Fields on Affine $n$-Space}
\author{Hanspeter Kraft and Andriy Regeta}
\date{February 2014}
\thanks{The authors are partially supported by a grant from the SNF (Schweizerischer Nationalfonds)}
\address{Mathematisches Institut,
Universit\"at Basel, Rheinsprung 21, CH-4051 Basel}
\email{Hanspeter.Kraft@unibas.ch}
\email{Andriy.Regeta@unibas.ch}
\newtheorem{thm}{Theorem}[section]
\newtheorem*{thm*}{Theorem}
\newtheorem*{conj*}{Conjecture}
\newtheorem*{mthm}{Main Theorem}
\newtheorem*{prob*}{Problem}
\newtheorem*{satz*}{Satz}
\newtheorem{prop}[thm]{Proposition}
\newtheorem*{prop*}{Proposition}
\newtheorem{lem}[thm]{Lemma}
\newtheorem*{lem*}{Lemma}
\newtheorem{cor}[thm]{Corollary}
\newtheorem*{cor*}{Corollary}
\theoremstyle{definition}
\theoremstyle{remark}
\newtheorem*{rem*}{Remark}
\newtheorem{rem}[thm]{Remark}
\newcommand{\di}{\partial_{x_{i}}}
\newcommand{\fp}[2]{\frac{\partial#1}{\partial x_{#2}}}
\renewcommand{\u}{\mathbf u}
\renewcommand{\t}{\mathbf t}
\newcommand{\name}[1]{\textsc{#1\/}}
\newcommand{\CC}{{\mathbb C}}
\newcommand{\QQ}{{\mathbb Q}}
\newcommand{\An}{{\mathbb A}^{n}}
\newcommand{\simto}{\xrightarrow{\sim}}
\newcommand{\be}{\begin{enumerate}}
\newcommand{\ee}{\end{enumerate}}
\newcommand{\beq}{\begin{equation}}
\newcommand{\eeq}{\end{equation}}
\DeclareMathOperator{\id}{id}
\DeclareMathOperator{\SL}{SL}
\DeclareMathOperator{\Aut}{Aut}
\DeclareMathOperator{\SAut}{SAut}
\DeclareMathOperator{\Ad}{Ad}
\DeclareMathOperator{\VF}{Vec}
\DeclareMathOperator{\GL}{GL}
\DeclareMathOperator{\Lie}{Lie}
\newcommand{\VFc}{\VF^{c}}
\newcommand{\VFzero}{\VF^{0}}
\DeclareMathOperator{\Der}{Der}
\DeclareMathOperator{\Jac}{Jac}
\DeclareMathOperator{\Div}{Div}
\DeclareMathOperator{\cent}{\mathfrak{cent}}
\DeclareMathOperator{\ad}{ad}
\DeclareMathOperator{\Gal}{Gal}
\DeclareMathOperator{\res}{res}
\DeclareMathOperator{\gl}{\mathfrak{gl}}
\DeclareMathOperator{\sll}{\mathfrak{sl}}
\newcommand{\gln}{\gl_{n}}
\newcommand{\sln}{\sll_{n}}
\newcommand{\bbmat}{\begin{bmatrix}}
\newcommand{\ebmat}{\end{bmatrix}}
\newcommand{\bsmat}{\begin{smallmatrix}}
\newcommand{\esmat}{\end{smallmatrix}}
\DeclareMathOperator{\aff}{\mathfrak{aff}}
\DeclareMathOperator{\saff}{\mathfrak{saff}}
\DeclareMathOperator{\Aff}{Aff}
\DeclareMathOperator{\SAff}{SAff}
\DeclareMathOperator{\Adj}{Adj}
\newcommand{\ef}{\partial_{E}}
\newcommand{\s}{\mathfrak{s}}
\newcommand{\Kx}{K[x_{1},\ldots,x_{n}]}
\newcommand{\dxi}{\frac{\partial}{\partial{x_{i}}}}
\DeclareMathOperator{\AutL}{Aut_{Lie}}
\DeclareMathOperator{\tr}{tr}
\newcommand{\dx}[1]{\frac{\partial}{\partial{x_{#1}}}}
\newcommand{\dd}[1]{\partial_{x_{#1}}}
\renewcommand{\phi}{\varphi}
\def \itt #1,#2:{\medskip\item[$\bullet$] %
     page\ \ignorespaces#1, line\ \ignorespaces#2:\ \ignorespaces}
\newcommand{\lab}[1]{\label{#1}}
\DeclareMathOperator{\rk}{rank}
\begin{document}

\begin{abstract} 
We show that every Lie algebra automorphisms of the vector fields $\VF(\An)$ of affine $n$-space $\An$, of the vector fields $\VFc(\An)$ with constant divergence, and of the vector fields $\VFzero(\An)$ with divergence zero is induced by an automorphism of $\An$. This generalizes results of the second author obtained in dimension 2, see \cite{Re2013Lie-subalgebras-of}. The case of $\VF(\An)$ is due to \name{Bavula} \cite{Ba2013The-group-of-autom}.

As an immediate consequence, we get the following result due to \name{Kulikov} \cite{Ku1992Generalized-and-lo}. If every injective endomorphism of the Lie algebra $\VF(\An)$ is an automorphism, then the Jacobian Conjecture holds in dimension $n$.

\end{abstract}

\maketitle

%%%%%%%%%%%%%%%%%%%%%%%%%%%%%%%%%%%%
\section{Introduction}
Let $K$ be an algebraically closed field of characteristic zero. 
Denote by $\VF(\An)$ the Lie algebra of polynomial vector fields on affine $n$-space $\An = K^{n}$. We have the standard identifications
$$
\VF(\An) = \Der (\Kx) = \left\{\sum_{i}f_{i}\dxi \mid f_{i}\in \Kx\right\}.
$$
The group $\Aut(\An)$ of polynomial automorphisms of $\An$ acts  on $\VF(\An)$ in the usual way. For $\phi\in\Aut(\An)$ and $\delta\in\VF(\An)$ we define
$$
\Ad(\phi) \delta := {\phi^{*}}^{-1} \circ \delta \circ \phi^{*}
$$ 
where we consider $\delta$ as a derivation $\delta\colon\Kx \to \Kx$ and where $\phi^{*}\colon \Kx \to \Kx$, $f\mapsto f\circ\phi$,  is the co-morphism of $\phi$.
It is shown in \cite{Ba2013The-group-of-autom} that $\Ad\colon \Aut(\An) \to \AutL(\VF(\An))$ is an isomorphism. We will give a short proof in section~\ref{proof1.sec}.

Recall that the {\it divergence} of a vector field $\delta = \sum_{i} f_{i}\dxi$ is defined by $\Div\delta := \sum_{i} \frac{\partial f_{i}}{\partial x_{i}}$. This allows to  define the following subspaces of $\VF(\An)$:
$$
\VFzero(\An):= \{\delta\in\VF(\An) \mid \Div\delta=0\} \subset \VFc(\An):= \{\delta\in\VF(\An) \mid \Div\delta\in K\},
$$
which are Lie subalgebras, because $\Div[\delta,\eta] = \delta(\Div\eta) - \eta(\Div\delta)$. We have
$$
\VFc(\An) = \VFzero(\An) \oplus KE \text{ where } E := \sum_{i}x_{i} \dx{i} \text{ is the Euler field}.
$$
\begin{rem}
The group $\Aut(\An)$ has the structure of an {\it ind-group}, i.e. an {\it infinite dimensional algebraic group} in the sense of  \name{Shafarevich} (see \cite{Sh1966On-some-infinite-d,Sh1981On-some-infinite-d}, cf. \cite{Ku2002Kac-Moody-groups-t}). One can show that its  Lie algebra is canonically isomorphic to $\VF^{c}(\An)$. This is one of the reasons for studying this Lie algebra and its properties.
\end{rem}

 The aim of  this note is to prove the following result about the automorphism groups of these Lie algebras. 
\begin{mthm}\lab{main.thm}
There are canonical isomorphisms 
$$
\Aut(\An) \simto \AutL(\VF(\An)) \simto \AutL(\VFc(\An)) \simto  \AutL(\VFzero(\An)).
$$
\end{mthm}
\begin{rem} 
It is easy to see that the theorem holds for any field $K$ of characteristic zero. In fact, all the homomorphisms are defined over $\QQ$, and are equivariant with respect to the obvious actions of the Galois group $\Gamma=\Gal(\bar K/K)$.
\end{rem}
As a consequence, we will get the following result  which is due to \name{Kulikov}, see Corollary~\ref{cor1}.
\begin{cor*}\lab{cor1}
If every injective endomorphism of the Lie algebra $\VF(\An)$ is an automorphism, then the Jacobian Conjecture holds in dimension $n$.
\end{cor*}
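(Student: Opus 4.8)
The plan is to recast the Jacobian Conjecture as a statement about a naturally associated injective endomorphism of $\VF(\An)$ and then to invoke the Main Theorem. So let $\phi\colon\An\to\An$ be a polynomial map whose Jacobian determinant $\Jac\phi$ is a nonzero constant; the goal is to show $\phi\in\Aut(\An)$. Since $\Char K=0$ and $\Jac\phi\in\Kst$, the map $\phi$ is étale, hence dominant, so the co-morphism $\phi^{*}\colon\Kx\to\Kx$ is an injective $K$-algebra homomorphism making $A:=\Kx$ an étale algebra over $B:=\Kx$ via $\phi^{*}$.

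First I would construct an injective Lie algebra endomorphism $\Lambda_{\phi}$ of $\VF(\An)$ extending the recipe $\Ad(\phi^{-1})$ valid for automorphisms. Because $\phi$ is étale we have $\Omega_{A/B}=0$ and the canonical map $\Omega_{B/K}\otimes_{B}A\to\Omega_{A/K}$ is an isomorphism; dualizing yields a bijection between $\Der(A)$ and the $K$-derivations $B\to A$, sending $\delta$ to $\delta\circ\phi^{*}$. Hence for every $\eps\in\Der(\Kx)=\VF(\An)$ there is a unique derivation $\Lambda_{\phi}(\eps)=:\delta\in\VF(\An)$ with $\delta\circ\phi^{*}=\phi^{*}\circ\eps$, the unique lift of $\eps$ along the étale map $\phi$. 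A short computation with the Leibniz rule gives $[\delta_{1},\delta_{2}]\circ\phi^{*}=\phi^{*}\circ[\eps_{1},\eps_{2}]$, so $\Lambda_{\phi}$ is a Lie algebra homomorphism, and it is injective because $\delta=0$ forces $\phi^{*}\circ\eps=0$, whence $\eps=0$ by injectivity of $\phi^{*}$. For $\phi\in\Aut(\An)$ one checks that this $\Lambda_{\phi}$ coincides with $\Ad(\phi^{-1})$.

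Now I would apply the hypothesis: $\Lambda_{\phi}$ is an injective endomorphism of $\VF(\An)$, hence an automorphism, so $\Lambda_{\phi}\in\AutL(\VF(\An))$. By the Main Theorem there is a unique $\psi\in\Aut(\An)$ with $\Lambda_{\phi}=\Ad(\psi)$, that is, $\Lambda_{\phi}(\eps)={\psi^{*}}^{-1}\circ\eps\circ\psi^{*}$ for all $\eps$. Combining this with the defining relation $\Lambda_{\phi}(\eps)\circ\phi^{*}=\phi^{*}\circ\eps$ and setting $\theta:=\psi^{*}\circ\phi^{*}=(\phi\circ\psi)^{*}$, I obtain $\eps\circ\theta=\theta\circ\eps$ for every $\eps\in\Der(\Kx)$.

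The final step is to read off that $\theta$ is invertible. Taking $\eps=\dd{i}$ and evaluating at $x_{j}$ gives $\dd{i}(\theta(x_{j}))=\theta(\dd{i}(x_{j}))=\delta_{ij}$ for all $i,j$, so $\theta(x_{j})=x_{j}+c_{j}$ for constants $c_{j}\in K$; thus $\theta$ is a translation, in particular an automorphism of $\Kx$. Therefore $\phi^{*}={\psi^{*}}^{-1}\circ\theta$ is an automorphism, i.e.\ $\phi\in\Aut(\An)$, which is exactly the Jacobian Conjecture in dimension $n$. I expect the main obstacle to be the clean construction of the injective endomorphism $\Lambda_{\phi}$ from the étale lifting of derivations, together with the verification that it is a Lie homomorphism; once the Main Theorem is in hand, the remaining identification of $\theta$ as a translation is a one-line calculation.
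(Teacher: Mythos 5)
Your proposal is correct and takes essentially the same route as the paper: the \'etale map $\phi$ induces an injective Lie algebra endomorphism of $\VF(\An)$ (your $\Lambda_{\phi}$ is precisely the paper's pull-back $\phi^{*}$, which the paper constructs concretely via the inverse of $\Jac(\phi)$ --- polynomial because $\det\Jac(\phi)\in K^{*}$ --- rather than by your K\"ahler-differential lifting using $\Omega_{A/B}=0$), and then the hypothesis together with Theorem~\ref{thm1} yields $\Lambda_{\phi}=\Ad(\psi)$. Your final step, showing that $\theta=(\phi\circ\psi)^{*}$ commutes with all derivations and is therefore the co-morphism of a translation, is exactly the algebraic form of the paper's computation (in its second proposition of that section) that $\psi^{*}=\id$ on $\VF(\An)$ forces $\Jac(\psi)_{a}=E$ for all $a$, so the two arguments agree in substance, differing only in presentation.
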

\begin{rem}
It was proved by \name{Belov-Kanel} and \name{Yu} that every automorphism of $\Aut(\An)$ as an ind-group is inner (see \cite{BeYu2012Lifting-of-the-aut}). Using the main results here one can give a short of this and extend it to the closed subgroup $\SAut(\An)\subset\Aut(\An)$ of automorphism with Jacobian determinant euqal to 1, see \cite{KrZa2013Locally-finite-gro}.
\end{rem}
We add here a lemma which will be used later on. The first statement is in \cite[Lemma~3]{Sh1981On-some-infinite-d}, and from that the second follows immediately.
\begin{lem}\lab{veczero.lem}
$\VFzero(\An)$ is a simple Lie algebra, and  $\VFzero(\An) = [\VFc(\An),\VFc(\An)]$.
\end{lem}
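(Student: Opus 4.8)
The plan is to prove two things: first that $\VFzero(\An)$ is simple, and second that $[\VFc,\VFc]=\VFzero$. Let me think about how I'd approach each.

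For simplicity: I need to show any nonzero ideal $I \subseteq \VFzero$ equals the whole thing. The key structural tool is the grading. $\VF(\An)$ is graded by polynomial degree... actually by degree minus 1, so that the constant vector fields $\partial_i$ have degree $-1$ and the Euler field $E$ has degree $0$. This grading restricts to $\VFzero$. The standard technique for proving simplicity of such Cartan-type Lie algebras is: take a nonzero element, bracket it repeatedly with the $\partial_i$ (which lower degree and lie in $\VFzero$), to produce a nonzero element of low degree in the ideal; then bracket up with degree-raising elements to recover everything.

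Let me think more carefully about the grading and what's available. The divergence-zero condition: $\Div(\sum f_i \partial_i) = \sum \partial f_i/\partial x_i = 0$. The constant fields $\partial_i$ have divergence zero. Brackets $[\partial_i, \delta]$ stay in the ideal and lower the polynomial degree of the coefficients. So repeatedly applying $\ad \partial_i$ to a nonzero $\delta \in I$ eventually yields a nonzero element whose coefficients are constants, i.e. a nonzero constant field $\sum c_i \partial_i \in I$.

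Now I need to generate all of $\VFzero$ from one nonzero constant field. I bracket with divergence-zero fields of higher degree. For instance, $[\sum c_i \partial_i, g\partial_j]$ where $g\partial_j$ has divergence zero (i.e. $\partial g/\partial x_j = 0$). This gives $(\sum_i c_i \partial g/\partial x_i)\partial_j$, and by choosing $g$ cleverly I can produce lots of elements. The divergence-zero vector fields are spanned by the "Hamiltonian-type" fields $x^a \partial_j - $ (corrections), or more usefully by fields of the form $f \partial_j - $ things making divergence vanish. Actually a clean spanning set: $\VFzero$ is spanned by $\{g \partial_j : \partial g/\partial x_j = 0\}$ together with fields like $x_i \partial_i - x_j \partial_j$. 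Let me organize the generation argument around getting all monomial fields $x^a \partial_j$ modulo the divergence relation.

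**The main obstacle** I expect is the second, spanning half of the simplicity argument—showing that from a single constant field one recovers every divergence-zero field—because $\VFzero$ does not have an obvious simple generating set the way $\VF$ does (where $x^a \partial_j$ span freely). I'd handle this by exhibiting, for each pair $(a,j)$, an explicit divergence-zero field producing $x^a \partial_j$ (up to divergence-zero corrections already known to be in $I$) as an iterated bracket, using that $\Char K = 0$ so no unwanted coefficients vanish. Given simplicity, the second claim is then short: $\VFc = \VFzero \oplus KE$, and since $\VFzero$ is simple it is perfect, $\VFzero = [\VFzero,\VFzero] \subseteq [\VFc,\VFc]$; conversely $[\VFc,\VFc] \subseteq \VFzero$ because $\Div[\delta,\eta] = \delta(\Div \eta) - \eta(\Div \delta)$ shows every bracket has divergence zero, and $[E,\delta]$ for $\delta\in\VFzero$ lands in $\VFzero$ too, so equality holds.
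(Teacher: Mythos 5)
First, a point of comparison: the paper does not actually prove this lemma. Simplicity of $\VFzero(\An)$ is quoted from \name{Shafarevich} \cite[Lemma~3]{Sh1981On-some-infinite-d}, and the second statement is derived from it exactly as you do: simplicity gives perfectness, so $\VFzero(\An) = [\VFzero(\An),\VFzero(\An)] \subseteq [\VFc(\An),\VFc(\An)]$, while $\Div[\delta,\eta] = \delta(\Div\eta)-\eta(\Div\delta)$ gives the reverse inclusion because derivations kill constant divergences. That half of your proposal is complete, correct, and is precisely the paper's ``follows immediately.'' Your descent step for simplicity is also sound: for a nonzero ideal $I$, iterating $\ad(\partial_{x_i})$ on a nonzero element produces a nonzero constant field in $I$, and after a linear change of coordinates (which preserves $\VFzero(\An)$ and carries ideals to ideals) you may assume this field is $\partial_{x_1}$.

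The genuine gap is in the ascent step, exactly where you predicted it, and the one concrete claim you make there is false: $\VFzero(\An)$ is \emph{not} spanned by $\{g\,\partial_{x_j} : \partial g/\partial x_j = 0\}$ together with fields like $x_i\partial_{x_i}-x_j\partial_{x_j}$. For $n=2$ the field $x_1x_2\,\partial_{x_1}-\tfrac12 x_2^2\,\partial_{x_2}$ has divergence zero and is homogeneous with quadratic coefficients, so the linear diagonal fields cannot contribute; but any sum from the first family has $\partial_{x_1}$-coefficient free of $x_1$, whereas here that coefficient is $x_1x_2$. So ``getting all monomial fields $x^a\partial_j$ modulo the divergence relation'' is left as a promissory note resting on a wrong spanning set. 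A clean way to close the gap is to show that $\ad(\partial_{x_1})\colon\VFzero(\An)\to\VFzero(\An)$ is surjective for $n\geq 2$: given $\mu=\sum_j h_j\partial_{x_j}$ with $\Div\mu=0$, set $g_j=\int_0^{x_1}h_j\,dt$, so $\eta=\sum_j g_j\partial_{x_j}$ satisfies $[\partial_{x_1},\eta]=\mu$ and $\Div\eta = h_1(0,x_2,\ldots,x_n)$; this defect is cancelled by subtracting $F\partial_{x_2}$ with $F=\int_0^{x_2}h_1(0,t,x_3,\ldots,x_n)\,dt$, which is killed by $\ad(\partial_{x_1})$. Then $I\supseteq[\partial_{x_1},\VFzero(\An)]=\VFzero(\An)$ and simplicity follows. (Alternatively, one can use the correct spanning set $\partial_{x_j}(f)\partial_{x_i}-\partial_{x_i}(f)\partial_{x_j}$, $f\in\Kx$; note that $f=\tfrac12 x_1x_2^2$ recovers the counterexample above.) As written, however, the generation step you yourself identified as the main obstacle is missing, so your argument does not yet prove simplicity --- which is also the one part the paper delegates to the literature rather than proving.
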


\par\medskip
%%%%%%%%%%%%%%%%%%%%%%%%%%%%%%%%%%%%%%%
\section{Group actions and vector fields}
If an algebraic group $G$ acts on an affine variety $X$ we obtain a canonical map $\Lie G \to \VF(X)$ in the usual way (cf. \cite[II.4.4]{Kr2011Algebraic-Transfor}).
For every $A \in \Lie G$ the associated vector field $\xi_{A}$ on $X$ is defined by
\beq\lab{equ1}
(\xi_{A})_{x} := d\mu_{x}(A) \text{ for } x\in X
\eeq
where $\mu_{x}\colon G \to X$, $g\mapsto gx$,  is the orbit map in $x\in X$. It is well-known that the linear map $A \mapsto \xi_{A}$ is a anti-homomorphism of Lie algebras, and that the kernel is equal to the Lie algebra of the kernel of the action $G\to \Aut(X)$. In particular, for any algebraic subgroup $G \subset \Aut(\An)$ we have an injection $\Lie G \to \VF(\An)$. We will denote the image by $L(G)$. 
Let us point out that a connected $G\subset \Aut(\An)$ is determined by $L(G)$, i.e. if $L(G)=L(H)$ for algebraic subgroups $G,H \subset \Aut(\An)$, then $G^{0}=H^{0}$.

Recall that the vector field $\delta\in\VF(\An)$ is called {\it locally nilpotent} if the action of $\delta$ on $\Kx$ is locally nilpotent, i.e., for any $f\in\Kx$ we have 
$\delta^{m}(f) = 0$ if $m$ is large enough. Every such $\delta$ defines an action of the additive group $K^{+}$ on $\An$ such that $\delta = \xi_{1}$ where  $1 \in K = \Lie K^{+}$ (see (\ref{equ1}) above).

\begin{lem}\lab{unipotent.lem}
Let $\u\subset\VF(\An)$ be a finite dimensional commutative Lie subalgebra consisting of locally nilpotent vector fields. Then there is a commutative unipotent algebraic subgroup $U \subset \Aut(\An)$ such that $L(U)=\u$. If $\cent_{\VF(\An)}(\u)=\u$, then $U$ acts transitively on $\An$.
\end{lem}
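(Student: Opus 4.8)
The plan is to integrate $\u$ to a group by exponentiation, and then to read off transitivity from an invariant-theoretic reformulation of the self-centralizing hypothesis.

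For the first assertion I would argue as follows. Since every $\delta\in\u$ is locally nilpotent, the exponential $\exp(\delta)\colon f\mapsto\sum_{m\ge 0}\frac{1}{m!}\delta^{m}(f)$ is a well-defined automorphism of $\Kx$, so $\exp(\delta)\in\Aut(\An)$. Because $\u$ is commutative, for $\delta,\eta\in\u$ one checks directly, by expanding both sides and using $[\delta,\eta]=0$ (all sums being finite by local nilpotency), that $\exp(\delta)\circ\exp(\eta)=\exp(\delta+\eta)$, where $\delta+\eta\in\u$ is again locally nilpotent by hypothesis. Thus $\exp$ is a homomorphism from the vector group $(\u,+)\cong K^{k}$, $k=\dim\u$, to $\Aut(\An)$, and it is a morphism of ind-groups since $\exp(\sum_i t_i\delta_i)(f)$ depends polynomially on the $t_i$ for each $f$. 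Its differential at the origin is injective, as the induced map on Lie algebras sends $\delta$ to the vector field $\delta\in\VF(\An)$; hence the kernel is a finite subgroup of the torsion-free group $(\u,+)$ and so is trivial, i.e. $\exp$ is injective. Therefore $U:=\exp(\u)$ is a commutative unipotent algebraic subgroup of $\Aut(\An)$, and since $t\mapsto\exp(t\delta)$ corresponds to the vector field $\delta$, we get $L(U)=\u$.

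For the second assertion I would reformulate the hypothesis $\cent_{\VF(\An)}(\u)=\u$ through the invariant ring $\Kx^{\u}=\{f\mid\delta(f)=0\ \forall\,\delta\in\u\}$. The key observation is that if $f\in\Kx^{\u}$ then for each $\delta\in\u$ the vector field $f\delta$ satisfies $[f\delta,\eta]=f[\delta,\eta]-\eta(f)\delta=0$ for all $\eta\in\u$, so $f\delta\in\cent_{\VF(\An)}(\u)=\u$. Fixing $\delta\neq 0$, the elements $f^{m}\delta$ ($m\ge 0$) then all lie in $\u$; were $f$ nonconstant they would be linearly independent over $K$, since $\VF(\An)$ is a free $\Kx$-module and $1,f,f^{2},\dots$ are linearly independent, contradicting $\dim\u<\infty$. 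Hence $\Kx^{\u}=K$, i.e. $U$ has no nonconstant regular invariants.

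Finally I would deduce transitivity from $\Kx^{U}=K$. As $\An$ is factorial with $\Kx^{*}=K^{*}$ and $U$ is unipotent, hence has no nontrivial characters, every rational $U$-invariant is a quotient of regular invariants, so $K(\An)^{U}=\operatorname{Frac}(\Kx^{U})=K$; by \name{Rosenlicht}'s theorem this forces a generic $U$-orbit to be dense, and by the \name{Kostant}--\name{Rosenlicht} theorem the orbits of the unipotent group $U$ on the affine variety $\An$ are closed, so a dense orbit is all of $\An$ and $U$ acts transitively. I expect the main obstacle to be precisely this last step: the short module-finiteness argument giving $\Kx^{\u}=K$ is where the hypothesis is really used, but turning the absence of nonconstant \emph{regular} invariants into genuine transitivity requires the additional input that $U$ has no nonconstant rational invariants (factoriality together with the absence of characters) and that unipotent orbits are closed.
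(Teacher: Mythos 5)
Your proof is correct and takes essentially the same route as the paper: you integrate the commuting locally nilpotent fields to a commutative unipotent subgroup $U$ with $L(U)=\u$ (the paper does this basis-wise via commuting $K^{+}$-actions), and for transitivity you use the same key computation $f\delta\in\cent_{\VF(\An)}(\u)$ for invariant $f$, together with the closedness of unipotent orbits. The only difference is expository: you argue contrapositively and spell out, via \name{Rosenlicht}'s theorem, factoriality of $\Kx$, and the absence of nontrivial characters of $U$, the step that a non-transitive unipotent action on $\An$ admits a non-constant regular invariant, which the paper asserts in one line.
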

\begin{proof}
It is clear that $\u = L(U)$ for a commutative unipotent subgroup $U\subset \Aut(\An)$. In fact, choose a basis $(\delta_{1},\ldots,\delta_{m})$ if $\u$ and consider the corresponding actions $\rho_{i}\colon K^{+}\to \Aut(\An)$. Since the associated vector fields $\delta_{i}$ commute, the same holds for the actions $\rho_{i}$, so that we get an action of $(K^{+})^{m}$. It follows that the image $U\subset\Aut(\An)$ is a commutative unipotent subgroup with $L(U)=\u$.

Assume that the action of $U$ is not transitive. Then all orbits have dimension $<n$, because orbits under unipotent groups are closed. But then there is a non-constant  $U$-invariant function $f \in\Kx$. This implies that for every $\delta\in\u$ the vector field $f\delta$ commutes with $\u$ and thus belongs to $\cent_{\VF(\An)}(\u)$, contradicting the assumption.
\end{proof}

Any $\delta\in\VF(\An)$ acts on the functions $\Kx$ as a derivation, and on the Lie algebra $\VF(\An)$ by the adjoint action, $\ad(\delta) \mu:=[\delta,\mu]$. These two actions are related as shown in the following lemma whose proof is obvious.

\begin{lem}\lab{adjoint.lem}
Let $\delta,\mu \in\VF(\An)$ be two commuting vector fields. Then
$$
\ad(\delta) (f\mu) = \delta(f) \mu.
$$
In particular, if $\ad(\delta)$ is locally nilpotent on $\VF(\An)$, then $\delta$ is locally nilpotent.
\end{lem}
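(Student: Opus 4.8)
The plan is to verify the displayed identity by a direct computation with derivations and then to iterate it. For the first statement, I would evaluate both sides on an arbitrary $g\in\Kx$. Expanding $\ad(\delta)(f\mu)=[\delta,f\mu]$ and using the Leibniz rule gives
$$
[\delta,f\mu](g)=\delta(f\mu(g))-f\mu(\delta(g))=\delta(f)\,\mu(g)+f\,[\delta,\mu](g).
$$
Since $\delta$ and $\mu$ commute the last term vanishes, leaving $\delta(f)\,\mu(g)$, which is exactly $(\delta(f)\mu)(g)$. As $g$ is arbitrary, this proves $\ad(\delta)(f\mu)=\delta(f)\mu$.

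For the ``in particular'' statement, I would first promote the identity to all powers by an easy induction: assuming $\ad(\delta)^{m}(f\mu)=\delta^{m}(f)\mu$, applying $\ad(\delta)$ once more and invoking the case just proved (with $\delta^{m}(f)$ in place of $f$) yields $\ad(\delta)^{m+1}(f\mu)=\delta^{m+1}(f)\mu$. The crucial choice is then $\mu=\delta$, which certainly commutes with $\delta$. Given any $f\in\Kx$, local nilpotency of $\ad(\delta)$ applied to the element $f\delta\in\VF(\An)$ produces an $m$ with $\ad(\delta)^{m}(f\delta)=\delta^{m}(f)\,\delta=0$. If $\delta=0$ it is locally nilpotent trivially; otherwise $\delta$ has a nonzero coefficient, and since $\Kx$ is an integral domain the relation $\delta^{m}(f)\,\delta=0$ forces $\delta^{m}(f)=0$. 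As $f$ was arbitrary, $\delta$ is locally nilpotent as a derivation of $\Kx$, that is, as a vector field.

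There is essentially no serious obstacle here, and the paper rightly calls the proof obvious. The only point that needs a moment's thought is the deduction in the second part: one must pick a concrete commuting partner for $\delta$, and taking $\mu=\delta$ itself is what makes the cancellation work, together with the observation that the coefficient ring $\Kx$ has no zero divisors, so that $\delta^{m}(f)\,\delta=0$ with $\delta\neq 0$ already gives $\delta^{m}(f)=0$.
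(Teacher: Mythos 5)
Your proof is correct and follows exactly the route the paper has in mind when it calls the proof obvious: the Leibniz-rule computation $[\delta,f\mu](g)=\delta(f)\mu(g)+f[\delta,\mu](g)$ for the identity, and the specialization $\mu=\delta$ together with the iterated formula $\ad(\delta)^{m}(f\delta)=\delta^{m}(f)\,\delta$ for the second claim. Your added care in noting that $\Kx$ is a domain, so $\delta^{m}(f)\,\delta=0$ with $\delta\neq 0$ forces $\delta^{m}(f)=0$, is precisely the small point the paper leaves to the reader.
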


\par\medskip
%%%%%%%%%%%%%%%%%%%%%%%%%%%%%%%%%%%%
\section{Proof of the Main Theorem, part I}\lab{proof1.sec}
We first give a proof of the following result due to \name{Bavula} \cite{Ba2013The-group-of-autom}.
\begin{thm}\lab{thm1}
The canonical map $\Ad\colon\Aut(\An) \to \AutL(\VF(\An))$ is an isomorphism.
\end{thm}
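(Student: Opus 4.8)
The map $\Ad$ is a group homomorphism, since $\Ad(\phi)$ is nothing but conjugation of derivations by the ring automorphism $\phi^{*}$, which preserves the bracket. The plan is to establish injectivity and surjectivity separately. Injectivity is a direct computation: if $\Ad(\phi)=\id$, then $\phi^{*}$ commutes with every derivation, so testing against $\dd i$ gives $\dd i(\phi^{*}x_{j})=\phi^{*}(\dd i\,x_{j})=\delta_{ij}$, whence $\phi^{*}x_{j}=x_{j}+c_{j}$ and $\phi$ is a translation; testing then against a field with a non-constant coefficient such as $x_{1}\dd 1$ forces all $c_{j}=0$, so $\phi=\id$.

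For surjectivity, take $\Theta\in\AutL(\VF(\An))$ and set $\mathfrak{d}:=\langle\dd 1,\dots,\dd n\rangle$, the abelian Lie algebra of the translation group $T$. Two facts make $\mathfrak d$ intrinsic enough to be transported by $\Theta$. First, each $\ad(\dd i)$ is locally nilpotent on $\VF(\An)$, a property preserved by any Lie algebra automorphism, so by Lemma~\ref{adjoint.lem} every $\Theta(\dd i)$ is again a locally nilpotent vector field. Second, $\cent_{\VF(\An)}(\mathfrak d)=\mathfrak d$, hence $\cent_{\VF(\An)}(\Theta(\mathfrak d))=\Theta(\mathfrak d)$. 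Thus $\Theta(\mathfrak d)$ is an $n$-dimensional commutative subalgebra of locally nilpotent fields equal to its own centralizer, and Lemma~\ref{unipotent.lem} yields a commutative unipotent $U\subset\Aut(\An)$ with $L(U)=\Theta(\mathfrak d)$ acting transitively on $\An$; by dimension the action is in fact simply transitive.

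The key step, and the one I expect to be the main obstacle, is to move $\Theta(\mathfrak d)$ back onto $\mathfrak d$ by an element of $\Aut(\An)$; equivalently, to conjugate the simply transitive commutative unipotent group $U$ onto $T$. Over a field of characteristic zero $U$ is a vector group, so I fix a group isomorphism $\beta\colon T\simto U$, and let $o\colon U\simto\An$, $u\mapsto u\cdot 0$, be the orbit map, which is an isomorphism of varieties because $U$ is unipotent. Then $\psi:=o\circ\beta$ is an automorphism of $\An$ satisfying $\psi\circ\tau_{s}=\beta(s)\circ\psi$ for every translation $\tau_{s}$, so that $\psi T\psi^{-1}=U$ and $\Ad(\psi)$ carries $\mathfrak d=L(T)$ to $L(U)=\Theta(\mathfrak d)$. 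Replacing $\Theta$ by $\Ad(\psi)^{-1}\circ\Theta$, I may henceforth assume $\Theta(\mathfrak d)=\mathfrak d$.

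It remains to show that such a $\Theta$ lies in the image of $\Ad$. Since $\Theta$ preserves $\mathfrak d$, it preserves its normalizer $\norm_{\VF(\An)}(\mathfrak d)=\aff_{n}=\Lie(\Aff_{n})$, the affine fields $\mathfrak d\oplus\gln$. As $\GL_{n}\subset\Aff_{n}$ acts on $\mathfrak d$ as the full $\GL(\mathfrak d)$, after composing with a suitable $\Ad(g)$, $g\in\GL_{n}$, I may assume $\Theta|_{\mathfrak d}=\id$; and since $\Ad$ of a translation sends the Euler field $E$ to $E+v$ while fixing $\mathfrak d$ pointwise, a further conjugation by a translation arranges $\Theta(E)=E$. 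Now $\Theta$ commutes with $\ad(E)$ and hence preserves the grading $\VF(\An)=\bigoplus_{d\ge -1}\VF_{d}$ into $\ad(E)$-eigenspaces, with $\VF_{-1}=\mathfrak d$ fixed pointwise. An induction on $d$ finishes the argument: for $\mu\in\VF_{d}$ one has $[\dd i,\Theta\mu-\mu]=\Theta[\dd i,\mu]-[\dd i,\mu]=0$ by the inductive hypothesis applied to $[\dd i,\mu]\in\VF_{d-1}$, so $\Theta\mu-\mu\in\VF_{d}\cap\cent_{\VF(\An)}(\mathfrak d)=\VF_{d}\cap\mathfrak d=0$. Therefore $\Theta=\id$, and unwinding the conjugations exhibits the original automorphism as $\Ad$ of a product of automorphisms of $\An$, proving surjectivity.
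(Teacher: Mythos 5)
Your proof is correct, and its first half is exactly the paper's argument: you show $\Theta(\mathfrak d)$ is an $n$-dimensional commutative algebra of locally nilpotent vector fields equal to its own centralizer (via Lemma~\ref{adjoint.lem}), obtain the transitive commutative unipotent group $U$ with $L(U)=\Theta(\mathfrak d)$ from Lemma~\ref{unipotent.lem}, and conjugate $U$ onto the translations by composing a group isomorphism with orbit maps at the origin, just as in the paper. Where you genuinely diverge is the endgame. The paper, once $\theta'(L(S))=L(S)$, invokes Lemma~\ref{AutAff.lem} --- every automorphism of $\saff_{n}$ is $\Ad$ of an affine map, proved by a cocycle/coboundary computation using semisimplicity of $\sln$ --- to make $\theta'$ the identity on all of $L(\Aff_{n})$, and then finishes with Lemma~\ref{endoVec.lem}, whose proof decomposes $\VF(\An)_{d}$ into the simple, pairwise non-isomorphic $\SL_{n}$-modules $\VFzero(\An)_{d}$ and $\Kx_{d+1}$ (Pieri's formula) and applies Schur's lemma, leaving an $\Ad(\lambda E)$ ambiguity that the action on $L(S)$ removes. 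You instead normalize only $\mathfrak d$ pointwise (using that $\Ad(\GL_{n})$ restricts to all of $\GL(\mathfrak d)$) and then $E$, and finish by a degree induction on the $\ad(E)$-eigenspace grading using $\cent_{\VF(\An)}(\mathfrak d)=\mathfrak d$; this is more elementary, avoiding both the Lie algebra cohomology of Lemma~\ref{AutAff.lem} and the representation theory of Lemma~\ref{endoVec.lem}, and your induction is clean and complete. What the paper's heavier route buys is reusability: Lemma~\ref{endoVec.lem} is stated for injective \emph{endomorphisms} and is invoked again in Section~5 for $\VFzero(\An)$ (and feeds the Kulikov corollary), so its generality is not wasted, whereas your argument as written treats only automorphisms of $\VF(\An)$. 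One small step you should make explicit: before conjugating by a translation you need $\Theta(E)\in E+\mathfrak d$. This follows in one line: $\Theta$ preserves $\norm_{\VF(\An)}(\mathfrak d)=\aff_{n}$, so $\Theta(E)$ has a constant part $v$ and a linear part $A$, and applying $\Theta$ to the relations $[E,\dd{i}]=-\dd{i}$, using $\Theta|_{\mathfrak d}=\id$, forces $A$ to be the identity matrix, i.e.\ the linear part of $\Theta(E)$ is $E$ itself.
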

Denote by $\Aff_{n} \subset \Aut(\An)$ the closed subgroup of affine transformations and by $S =(K^{+})^{n} \subset \Aff_{n}$ the subgroup of translations. Then 
\beq
L(\Aff_{n}) = \langle x_{i}\dd{j},\dd{k} \mid 1\leq i,j,k\leq n\rangle \supset
L(S) = \langle \dd{1},\ldots,\dd{n} \rangle.
\eeq
where $\dd{j} := \dx{j}$.
Put $\aff_{n}:=\Lie\Aff_{n}$ and $\saff_{n}:= [\aff_{n},\aff_{n}]$. We have $\saff_{n}:=\Lie \SAff_{n}$ where $\SAff_{n}:=(\Aff_{n},\Aff_{n}) \subset \Aff_{n}$ is the commutator subgroup, i.e.  the closed subgroup of those affine transformations $x\mapsto gx + b$ where $g \in \SL_{n}$.  The next lemma is certainly known. For the convenience of the reader we indicated a short proof. 
\begin{lem}\lab{AutAff.lem}
The canonical homomorphisms 
$$
\begin{CD}
\Aff_{n} @>\Ad>\simeq> \AutL(\aff_{n}) @>\res>\simeq> \AutL(\saff_{n})
\end{CD}
$$ 
are isomorphisms.
\end{lem}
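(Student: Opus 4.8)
The plan rests on the semidirect decomposition $\aff_n = \gln \ltimes \t$, where $\t := L(S) = \langle\dd1,\dots,\dd n\rangle$ is the abelian ideal of translations and $\gln = \langle x_i\dd j\rangle$ is the linear part; correspondingly $\saff_n = \sln \ltimes \t$. Throughout I take $n\geq 2$. The first thing I would record is that $\t$ is the \emph{nilradical} of $\aff_n$: one has $\rad\aff_n = KE\oplus\t$ and $[\aff_n,\rad\aff_n]=\t$, while $\ad E$ acts as $-\id$ on $\t$, so no nilpotent ideal can exceed $\t$. Hence $\t$ is a characteristic ideal, and the same holds for the nilradical $\t$ of $\saff_n$. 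In both algebras $\sln$ is a Levi subalgebra, since the radical is $\t$ (resp.\ $KE\oplus\t$) and $\sln$ is a semisimple complement.

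Injectivity of $\Ad$ amounts to $Z(\Aff_n)=\{e\}$, an elementary computation in $\GL_n\ltimes K^n$; as $\Aff_n$ is connected, $\ker\Ad = Z(\Aff_n)$. For surjectivity I would take $\theta\in\AutL(\aff_n)$ and normalize it by elements of $\Ad(\Aff_n)$. Since $\theta(\sln)$ is again a Levi subalgebra, the Malcev--Harish-Chandra conjugacy theorem provides $v\in\t$ with $e^{\ad v}(\theta(\sln))=\sln$; as $e^{\ad v}=\Ad(\phi)$ for the translation $\phi$ determined by $v$, after replacing $\theta$ by $\Ad(\phi)\circ\theta$ I may assume $\theta(\sln)=\sln$. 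Then $\theta|_{\sln}\in\Aut(\sln)$, and composing with $\Ad(g)$ for a suitable $g\in\SL_n$ reduces $\theta|_{\sln}$ either to the identity or to the outer automorphism $X\mapsto -X^{t}$.

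The crux is to exclude the outer automorphism, and I expect this to be the only real obstacle. If $\theta|_{\sln}=(X\mapsto -X^t)$ and $\theta|_\t = A$, then compatibility of $\theta$ with the brackets $[X,v]$ forces $A$ to intertwine the standard $\sln$-module $\t$ with its dual $\t^{*}$; since $\t\not\cong\t^{*}$ for $n\geq 3$ (and the outer automorphism is already inner for $n=2$), no such $A$ exists, so $\theta|_{\sln}=\id$. By Schur's lemma $\theta|_\t = \lambda\,\id$ with $\lambda\in K^{*}$, the standard module being irreducible, and a short bracket computation with $E$ — using $[E,\sln]=0$, $[E,v]=-v$, and the absence of $\sln$-invariants in $\t$ — pins down $\theta(E)=E$. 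The residual automorphism $(\id_{\gln},\lambda\,\id_\t)$ equals $\Ad(h_\lambda)$ for the homothety $h_\lambda\colon x\mapsto\lambda x$, whence $\theta\in\Ad(\Aff_n)$; everything beyond the self-duality exclusion is dictated by the semidirect structure.

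Finally, for $\res$: well-definedness follows from $\saff_n=[\aff_n,\aff_n]$ being characteristic. Injectivity is direct — if $\theta|_{\saff_n}=\id$, writing $\theta(E)=cE+s$ with $s\in\saff_n$ and imposing $\theta([E,y])=[\theta E,\theta y]$ for $y\in\saff_n$ forces $c=1$ and $s=0$ (using that $\sln$ is centerless and has no invariants in $\t$), so $\theta=\id$. Surjectivity repeats the normalization above inside $\saff_n$: any $\psi\in\AutL(\saff_n)$ is reduced, by restrictions to $\saff_n$ of suitable elements of $\Ad(\Aff_n)$, to $\psi|_{\sln}=\id$ and $\psi|_\t=\lambda\,\id$, which is precisely the restriction of $\Ad(h_\lambda)$; hence $\psi$ lies in the image of $\res$. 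Combining the three parts yields the asserted chain of isomorphisms.
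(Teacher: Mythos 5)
Your proof is correct, but it runs in the opposite order from the paper's and uses heavier structural input. The paper reduces everything at once to surjectivity of the composite $\res\circ\Ad$ (both maps being visibly injective for $n\geq 2$; note the lemma genuinely fails for $n=1$, where $\res$ is not injective, so your standing assumption $n\geq 2$ matches what the paper tacitly needs). Given $\theta\in\AutL(\saff_{n})$, the paper normalizes on the \emph{radical} first: since $\s$ is the solvable radical of $\saff_{n}$, one has $g:=\theta|_{\s}\in\GL(\s)=\GL_{n}$, and replacing $\theta$ by $\Ad(g^{-1})\circ\theta$ makes $\theta|_{\s}=\id$. Because all of $\GL_{n}$ (not just $\SL_{n}$) is available inside $\Aff_{n}$, this one step silently absorbs both your Schur scalar $\lambda$ and the potential outer automorphism: the bracket relation $\ad(B)(a,0)=(Ba,0)$ then \emph{forces} the induced automorphism $\bar\theta$ of $\sln$ to be the identity, with no case analysis on $\Aut(\sln)$ and no appeal to $\t\not\cong\t^{*}$ or to the $n=2$ exception. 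The only remaining deviation is a linear map $\ell\colon\sln\to\s$, which the bracket shows to be a $1$-cocycle; Whitehead's lemma makes it a coboundary $\ell(A)=Av$, i.e.\ $\theta=\Ad(-v)$ for a translation. You instead normalize the \emph{Levi} first via Malcev--Harish-Chandra conjugacy, then classify $\theta|_{\sln}$, exclude the outer automorphism by non-self-duality of the standard module, and finish with Schur's lemma, the $E$-bracket computation, and the homothety $h_{\lambda}$; you also prove injectivity and surjectivity of the two maps separately rather than through the composite. The two arguments are cousins --- the Malcev conjugacy you invoke is driven by the same vanishing $H^{1}(\sln,\s)=0$ that the paper uses directly as the coboundary step --- so neither is circular; yours buys a more conceptual group-theoretic picture (characteristic nilradical, Levi conjugacy, structure of $\Aut(\sln)$) at the cost of extra ingredients and the $n=2$/$n\geq3$ split, while the paper's choice of normalizing on $\s$ first makes exactly those complications evaporate and reduces the whole lemma to one explicit cocycle computation.
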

\begin{proof}
It is clear that the two homomorphisms $\Ad\colon \Aff_{n} \to \AutL(\aff_{n})$ and  $\res\colon\AutL(\aff_{n}) \to \AutL(\saff_{n})$ are both injective. Thus it suffices to show that the composition $\res\circ\Ad$ is surjective.

We write the elements of $\Aff_{n}$ in the form $(v,g)$ with $v\in S = (K^{+})^{n}$, $g \in \GL_{n}$ where $(v,g)x = gx + v$ for $x\in \An$. It follows that  $(v,g)(w,h) = (v+gw, gh)$. Similarly, $(a,A)\in\aff_{n}$ means that $a\in \s=\Lie S = (K)^{n}$, $A \in \gln$, and $(a,A) x = Ax + a$. For the adjoint representation of $g\in \GL_{n}$ and of $v\in S$ on $\aff_{n}$ we find
\beq\lab{equ3}
\Ad(g)(a,A) = (ga, gAg^{-1}) \text{ \ and \ } \Ad(v)(a,A) = (a-Av,A),
\eeq
and thus, for $(b,B)\in\aff_{n}$,
\beq\lab{equ4}
\ad(B)(a,A) = (Ba, [B,A]) \text{ \ and \ } \ad(b)(a,A) = (a-Ab,A).
\eeq
Now let $\theta$ be an automorphism of the Lie algebra $\saff_{n}$. Then $\theta(\s) = \s$ since $\s$ is the solvable radical of $\saff_{n}$. Since $g := \theta|_{\s}\in\GL_{n}$, we can replace $\theta$ by $\Ad(g^{-1})\circ\theta$ and thus assume, by $(\ref{equ3})$,  that $\theta$ is the identity on $\t$. This implies that $\theta(a,A) = (a+\ell(A),\bar\theta(A))$ where $\ell\colon \sln \to \s$ is a linear map and $\bar\theta\colon\sln \simto \sln$ is a Lie algebra automomorphism.

From $(\ref{equ4})$ we get $\ad(b,B)(a,0) = \ad(B)(a,0) = (Ba,0)$ for all $a\in\s$, hence 
\begin{multline*}
(Ba,0) = \theta(Ba,0) = \theta(\ad(B)(a,0)) =\\ = \ad(\theta(B))(a,0)=\ad(\bar\theta(B))(a,0) = (\bar\theta(B)a,0).
\end{multline*}
Thus $\bar\theta(B) = B$, i.e. $\theta(a,A) = (a+\ell(A),A)$. Now an easy calculation shows that $\ell([A,B])= A\ell(B) - B\ell(A)$. This means that  $\ell$ is a cocycle of $\sln$. Since $\sln$ is semisimple, $\ell$ is a coboundary and thus $\ell(A) = Av$ for a suitable $v\in K^{n}$. In view of $(\ref{equ4})$ this implies that $\theta = \Ad(-v)$, and the claim follows.
\end{proof}

\begin{proof}[Proof of Theorem~\ref{thm1}]
It is clear that the homomorphism 
$$
\Ad\colon\Aut(\An) \to \AutL(\VF(\An))
$$ 
is injective.
So let $\theta \in \AutL(\VF(\An))$ be an arbitrary automorphism. 

We have seen above that $L(S) = \langle \dd{1},\ldots,\dd{n} \rangle \subset \VF(\An)$ where $S\subset \Aff_{n}$ is the subgroup of translations.
Since $\VF(\An) = \Kx L(S)$ we get from Lemma~\ref{adjoint.lem} that the adjoint action of any $\delta\in L(S)$ on $\VF(\An)$ is locally nilpotent, and the same holds for any element from $\u:=\theta(L(S))$. This implies, by Lemma~\ref{unipotent.lem}, that $\u = L(U)$ for a commutative unipotent subgroup $U$ of dimension $n$. Moreover, $\cent_{\VF(\An)}(L(S)) = L(S)$, hence $\cent_{\VF(\An)}(\u)=\u$ which implies, again by Lemma~\ref{unipotent.lem}, that $U$ acts transitively on $\An$. Thus every orbit map $U \to \An$ is an isomorphism. It follows that there is an automorphism $\phi\in\Aut(\An)$ such that $\phi U \phi^{-1} = S$. In fact, fix a group isomorphism $\phi\colon U \simto S$ and take the orbit maps $\mu_{S}\colon S \simto \An$ and $\mu_{U}\colon U \simto \An$ at the origin $0\in\An$. Then $\phi:= \mu_{S}\circ\phi\circ\mu_{U}^{-1}$ has the property that $\phi^{-1}u\phi = \phi(u)$ for all $u\in U$. 

It follows that the automorphism $\theta':=\Ad(\phi)\circ\theta\in\AutL(\VF(\An))$ sends $L(S)$ isomorphically onto itself. Now the relations $[\dx{i},x_{j}\dx{k}] = \delta_{ij} \dx{k}$ imply that $\theta'(L(\Aff_{n})) = L(\Aff_{n})$, and from Lemma~\ref{AutAff.lem} we obtain an affine automorphism $\tau\in\Aff_{n}$ such that $\Ad(\tau)\circ\theta'$ is the identity on $L(\Aff_{n})$. Hence, by the following lemma, $\Ad(\tau)\circ\theta'=\id$, and the claim follows.
\end{proof}

\begin{lem}\lab{endoVec.lem}
Let $\theta$ be an injective endomorphism of one of the Lie algebras $\VF(\An)$, $\VFc(\An)$ or $\VFzero(\An)$. If $\theta$ is the identity on $L(\SL_{n})$, then $\theta = \Ad(\lambda E)$ for some $\lambda \in K^{*}$.
%If an automorphism $\theta\in\AutL(\VF(\An))$ is the identity on $L(\Aff_{n})$, then $\theta = \id$.
\end{lem}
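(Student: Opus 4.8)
The plan is to use that $\theta$ restricts to the identity on $\sln=L(\SL_{n})$, so that $\theta$ is $\sln$-equivariant for the adjoint action: $\theta([A,v])=[A,\theta(v)]$ for $A\in\sln$ and any $v$. Let $\mathfrak g$ denote whichever of the three algebras is given; it inherits from $\VF(\An)$ the grading $\mathfrak g=\bigoplus_{d\ge-1}\mathfrak g_{d}$ in which $\dx i$ has degree $-1$, so that $\ad(E)$ acts on $\mathfrak g_{d}$ as multiplication by $d$. Here $\mathfrak g_{-1}=\langle\dd1,\ldots,\dd n\rangle$ are the translations (divergence-free, hence present in all three algebras) and $\mathfrak g_{0}\supseteq\sln$. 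As an $\sln$-module, $\VF(\An)_{d}\cong S^{d+1}(V^{*})\otimes V$, where $V=K^{n}$ is the standard module. I assume $n\ge2$. The goal is to show that $\theta$ preserves the grading and scales $\mathfrak g_{d}$ by a single geometric factor, which is exactly the adjoint action of a scaling $x\mapsto\mu x$, i.e.\ $\theta=\Ad(\lambda E)$.

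For $\mathfrak g=\VF(\An)$ or $\VFc(\An)$ the Euler field is available, and I would first compute $\cent_{\VF(\An)}(\sln)=KE$: since $\SL_{n}$ is transitive on $\An\setminus\{0\}$ with only constant invariants, every invariant field is radial $hE$ with $h$ invariant, hence $h\in K$; intersecting with $\VFc(\An)$ changes nothing. Thus $\theta(E)\in KE$, say $\theta(E)=cE$ with $c\in K^{*}$ by injectivity. Since $[E,v]=d\,v$ on $\mathfrak g_{d}$, this gives $\theta(\mathfrak g_{d})\subseteq\mathfrak g_{d/c}$; feeding in the nonzero pieces $d=-1$ and $d=2$ and using that $\theta$ is injective forces $d/c$ to be an admissible degree (an integer $\ge -1$), which pins down $c=1$. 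Hence $\theta$ preserves the grading in these two cases.

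For $\mathfrak g=\VFzero(\An)$ the element $E$ is absent and $\cent_{\VFzero(\An)}(\sln)=0$, so I would instead locate the translations representation-theoretically. The divergence $\Div\colon\VF(\An)_{d}\to S^{d}(V^{*})$ is $\sln$-equivariant and, for $d\ge1$, surjective with kernel $\VFzero(\An)_{d}$, giving the equivariant exact sequence $0\to\VFzero(\An)_{d}\to\VF(\An)_{d}\xrightarrow{\Div}S^{d}(V^{*})\to0$. A short highest-weight computation (using $V\otimes V^{*}\cong\sln\oplus K$ and the irreducibility of $S^{k}(V^{*})$) then shows that for $d\ge0$ the standard module $V$ does not occur in $\VFzero(\An)_{d}$. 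Consequently the $V$-isotypic component of $\VFzero(\An)$ is precisely $\mathfrak g_{-1}$, and since $\theta$ preserves isotypic components it maps $\mathfrak g_{-1}$ into itself.

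In all three cases $\theta|_{\mathfrak g_{-1}}$ is then an $\sln$-endomorphism of the irreducible module $V$, hence a scalar $\mu\in K^{*}$ by Schur's lemma and injectivity. I would finish by induction on $d\ge-1$, proving $\theta|_{\mathfrak g_{d}}=\mu^{-d}\id$: for $w\in\mathfrak g_{d}$ ($d\ge1$) and any translation $u$, applying $\theta$ to $[u,w]\in\mathfrak g_{d-1}$ and inserting the inductive value on $\mathfrak g_{d-1}$ yields $[u,\theta(w)-\mu^{-d}w]=0$. Here $\theta(w)$ lands in $\mathfrak g_{d}$ (for $\VF(\An)$ and $\VFc(\An)$ because the grading is already preserved; for $\VFzero(\An)$ because $\mathfrak g_{d}$ carries no copy of $V$, which excludes the only other degree permitted by the centralizer computation), and the centralizer of the translations in $\VF(\An)$ consists of the constant-coefficient fields, so the difference, lying in degree $d\ge1$, must vanish. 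Thus $\theta=\Ad(\lambda E)$ with $\lambda=\mu\in K^{*}$. The main obstacle is the representation-theoretic input for $\VFzero(\An)$, namely that the standard module sits only in degree $-1$, together with the bookkeeping forcing $c=1$ and excluding the spurious degree $-1$ contribution to $\theta(\mathfrak g_{d})$; once the grading is respected and $\theta$ is scalar on $\mathfrak g_{-1}$, the inductive propagation is routine.
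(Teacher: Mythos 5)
Your proof is correct (given your standing assumption $n\ge 2$, which the paper tacitly needs as well: for $n=1$ the hypothesis on $L(\SL_{n})$ is vacuous and the conclusion fails), and it takes a genuinely different route from the paper's. The paper also argues $\SL_{n}$-equivariantly on the graded pieces, but its pivot is the assertion (via \name{Pieri}'s formula) that the simple modules $\VFzero(\An)_{d}$ together with the divergence quotients are \emph{pairwise non-isomorphic}, so that equivariance alone forces $\theta$ to act by a scalar $\lambda_{d}$ on each $\VFzero(\An)_{d}$; the multiplicativity $\lambda_{e}\lambda_{d}=\lambda_{e+d}$, hence $\lambda_{d}=\lambda^{d}$, is then extracted from the explicit bracket $[x_{j}^{e+1}\dd{i},x_{i}^{d+1}\dd{j}]$, and after composing with $\Ad(\lambda E)$ the Euler relation $[\ef,\delta]=d\delta$ finishes. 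You instead pin down the grading by the centralizer computation $\cent_{\VF(\An)}(\sln)=KE$ together with the integrality of the $\ad(E)$-spectrum (forcing $\theta(E)=E$), use only the single isotypic fact that the standard module $V$ occurs in $\VFzero(\An)$ solely in degree $-1$, and propagate the Schur scalar $\mu$ from $\mathfrak{g}_{-1}$ upward by bracketing with translations and $\cent_{\VF(\An)}(L(S))=L(S)$ --- an induction that replaces the paper's bracket identity. The trade-off: the paper's argument is shorter once Pieri is invoked, while yours uses strictly weaker representation-theoretic input and, notably, survives at $n=2$, where the paper's pairwise non-isomorphism claim actually fails: $f\mapsto \fp{f}{2}\dd{1}-\fp{f}{1}\dd{2}$ is an $\SL_{2}$-isomorphism from $K[x_{1},x_{2}]_{d+2}$ onto $\VFzero(\Atwo)_{d}$, so each $\VFzero(\Atwo)_{d}$ is isomorphic to a divergence quotient in another degree, and equivariance alone does not diagonalize $\theta$ there; your centralizer/induction mechanism closes exactly this case. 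Two small points to polish: state the base step $d=0$ of your induction explicitly ($\theta|_{\mathfrak{g}_{0}}=\id$, from the hypothesis on $\sln$ together with $\theta(E)=E$, resp.\ $\mathfrak{g}_{0}=\sln$ in the $\VFzero$ case), and phrase the ``excluded degree'' step as the vanishing of the $\sln$-equivariant map $w\mapsto\theta(w)-\mu^{-d}w$ from $\mathfrak{g}_{d}$ to $\cent_{\VF(\An)}(L(S))\cap\VFzero(\An)=\mathfrak{g}_{-1}\cong V$, which is precisely what your no-copy-of-$V$ computation delivers and which works without first knowing that $\theta(w)$ is homogeneous.
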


\begin{proof}
We consider the action of $\GL_{n}$ on $\VF(\An)$. Denote by $\VF(\An)_{d}$ the homogeneous vector fields of degree $d$, i.e. 
$$
\VF(\An)_{d} := \bigoplus_{i} \Kx_{d+1}\,\dd{i} \simeq \Kx_{d+1}\otimes K^{n}.
$$
Note that $\lambda E\in \GL_{n}$ acts by scalar multiplication with $\lambda^{-d}$ on $\VF(\An)_{d}$.
We have split exact sequences of $\GL_{n}$-modules
$$
\begin{CD}
0 @>>> \VFzero(\An)_{d} @>>> \VF(\An)_{d} @>\Div>> \Kx_{d+1} @>>> 0
\end{CD}
$$
where all $\SL_{n}$-modules $\VFzero(\An)_{d}$ and $\Kx_{d+1}$ are simple and pairwise non-isomorphic (see \name{Pieri}'s formula \cite[Chap. 9, section 10.2]{Pr2007Lie-groups}). 

Now let $\theta$ be an automorphism of $\VF(\An)$. If $\theta$ is the identity on $L(\SL_{n})$, then $\theta$ is $\SL_{n}$-equivariant and thus acts with a scalar $\lambda_{d}$ on $\VFzero(\An)_{d}$ and with a scalar $\mu_{d}$ on $\Kx_{d+1}$. The relation 
$$
[x_{j}^{e+1}\dd{i},x_{i}^{d+1}\dd{j}] = (d+1)x_{i}^{d}x_{j}^{e+1}\dd{j}-(e+1)x_{i}^{d+1}x_{j}^{e}\dd{j}
$$ 
shows that $\lambda_{e}\lambda_{d}=\lambda_{e+d}$, hence $\lambda_{d}=\lambda^{d}$ for a suitable $\lambda\in K^{*}$. It follows that the composition  
$\theta':=\Ad(\lambda E) \circ \theta$ is the identity on $\VFzero(\An)$. Now we use the Euler field $\ef$ and the relation $[\ef,\delta] = d \cdot\delta$ for $\delta \in \VF(\An)_{d}$ to see that $\theta'$ is the identity everywhere. This proves the claim for $\VF(\An)$. The two other cases are similar. 
\end{proof}

\par\medskip
%%%%%%%%%%%%%%%%%%%%%%%%%%%%%%%%%%%%%%%%%%
\section{\'Etale Morphisms and Vector Fields}
In the first section we defined the action of $\Aut(\An)$ on the vector fields $\VF(\An)$ by the usual formula $\Ad(\phi)\delta:= {\phi^{*}}^{-1} \circ \delta \circ \phi^{*}$. 
In more geometric terms, considering $\delta$ as a section of the tangent bundle $T\An=\An\times\CC^{n} \to \An$, one defines the pull-back of $\delta$ by
\beq\lab{equ5}
\phi^{*}(\delta) := (d\phi)^{-1} \circ \delta \circ \phi, \text{ i.e., } \phi^{*}(\delta)_{a}= (d\phi_{a})^{-1}(\delta_{\phi(a)}) \text{ for } a \in \An.
\eeq
Clearly,  $\phi^{*}(\delta) = \Ad(\phi^{-1})\delta$. However, 
the second formula above shows that the pull-back $\phi^{*}(\delta)$ of a vector field is also defined for \'etale morphisms $\phi\colon\An \to \An$.

\begin{prop}
Let $\phi\colon \An \to \An$ be an \'etale morphism. For any vector field $\delta \in \VF(\An)$ there is a uniquely defined vector field $\phi^{*}(\delta)$ such that
\beq\lab{pullback}
d\phi \circ \phi^{*}(\delta) = \delta \circ \phi.
\eeq
The map $\phi^{*}\colon \VF(\An) \to \VF(\An)$ is an injective homomorphism of Lie algebras. Moreover, $(\eta\circ\phi)^{*}= \phi^{*}\circ\eta^{*}$.
\end{prop}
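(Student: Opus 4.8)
The plan is to translate the geometric condition (\ref{pullback}) into a purely algebraic characterization of $\phi^{*}(\delta)$ as a derivation of $\Kx$, and then to read off all the asserted properties almost formally. Writing $\phi^{*}\colon\Kx\to\Kx$ for the comorphism $f\mapsto f\circ\phi$, I claim that (\ref{pullback}) is equivalent to requiring $\phi^{*}(\delta)$ to be the derivation $D$ of $\Kx$ satisfying
\[
D\circ\phi^{*}=\phi^{*}\circ\delta .
\]
Indeed, evaluating the pointwise identity $d\phi_{a}(\phi^{*}(\delta)_{a})=\delta_{\phi(a)}$ against the differential $df_{\phi(a)}$ of an arbitrary $f\in\Kx$ and using the chain rule gives $\phi^{*}(\delta)(\phi^{*}f)=\phi^{*}(\delta f)$, and conversely. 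Since $\phi$ is \'etale and $\An$ is irreducible, $\phi$ is dominant, so the comorphism $\phi^{*}$ is injective; I will use this for the injectivity statement.

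The hard part --- and the only place where \'etaleness is used essentially --- is existence, i.e. showing that the pointwise recipe $\phi^{*}(\delta)_{a}=(d\phi_{a})^{-1}(\delta_{\phi(a)})$ actually produces a \emph{polynomial} vector field. First I would form the Jacobian matrix $J:=(\partial\phi_{i}/\partial x_{j})$ and note that $\det J$ is nowhere vanishing on $\An$, hence, $K$ being algebraically closed, a nonzero constant by the Nullstellensatz; thus $J\in\GL_{n}(\Kx)$. A derivation $D$ with $D\circ\phi^{*}=\phi^{*}\circ\delta$ is forced, on the generators, to satisfy $\sum_{j}(\partial\phi_{i}/\partial x_{j})\,D(x_{j})=\phi^{*}(\delta x_{i})$ for all $i$; inverting $J$ over $\Kx$ gives a unique solution $(D(x_{1}),\dots,D(x_{n}))\in\Kx^{n}$, which defines the polynomial vector field $\phi^{*}(\delta)$. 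To check that this $D$ really satisfies $D\circ\phi^{*}=\phi^{*}\circ\delta$ on all of $\Kx$ and not merely on the $x_{i}$, I would observe that both $D\circ\phi^{*}$ and $\phi^{*}\circ\delta$ are $\phi^{*}$-twisted derivations, i.e. derivations of $\Kx$ into $\Kx$ regarded as a module via $\phi^{*}$; two such maps agreeing on the generators coincide. The same remark yields uniqueness.

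With the characterization $D\circ\phi^{*}=\phi^{*}\circ\delta$ in hand, the remaining assertions are formal manipulations of operators on $\Kx$, each concluded by the uniqueness just proved. Linearity of $\delta\mapsto\phi^{*}(\delta)$ is immediate. For the bracket, setting $D=\phi^{*}(\delta)$ and $E=\phi^{*}(\eta)$ I would compute $[D,E]\circ\phi^{*}=D\circ(\phi^{*}\circ\eta)-E\circ(\phi^{*}\circ\delta)=\phi^{*}\circ(\delta\eta-\eta\delta)=\phi^{*}\circ[\delta,\eta]$; since $[D,E]$ is again a derivation, uniqueness gives $[\phi^{*}(\delta),\phi^{*}(\eta)]=\phi^{*}([\delta,\eta])$. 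Injectivity follows because $\phi^{*}(\delta)=0$ forces $\phi^{*}\circ\delta=0$, whence $\delta=0$ by injectivity of the comorphism. Finally, for functoriality I would use that $\eta\circ\phi$ is again \'etale and that the comorphisms compose as $(\eta\circ\phi)^{*}=\phi^{*}\circ\eta^{*}$; substituting the defining relations for $\phi^{*}(\eta^{*}(\delta))$ and for $\eta^{*}(\delta)$ shows $\phi^{*}(\eta^{*}(\delta))\circ(\eta\circ\phi)^{*}=(\eta\circ\phi)^{*}\circ\delta$, so uniqueness identifies $\phi^{*}\circ\eta^{*}$ with $(\eta\circ\phi)^{*}$ on vector fields.
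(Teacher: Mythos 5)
Your proof is correct, and its substantive core coincides with the paper's: in both arguments, existence and polynomiality of $\phi^{*}(\delta)$ come from solving the linear system given by the Jacobian matrix and observing that, since $\det \Jac(\phi)$ is nowhere vanishing and hence a nonzero constant, $\Jac(\phi)^{-1}$ has entries in $\Kx$. Where you genuinely diverge is in the packaging: the paper works pointwise, defining $\tilde\delta_{a}:=(\Jac(\phi)_{a})^{-1}\cdot\delta_{\phi(a)}$, and then dismisses the remaining assertions with ``the others follow immediately,'' whereas you first establish the algebraic characterization $\phi^{*}(\delta)\circ\phi^{*}=\phi^{*}\circ\delta$ (the map on the right being the comorphism) together with the uniqueness principle that two $\phi^{*}$-twisted derivations agreeing on the coordinates coincide. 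That reformulation buys you honest one-line verifications of exactly the parts the paper leaves to the reader: compatibility with the Lie bracket, injectivity (via injectivity of the comorphism, $\phi$ being dominant because it is \'etale), and the contravariance $(\eta\circ\phi)^{*}=\phi^{*}\circ\eta^{*}$. All the individual steps check out --- the chain-rule equivalence of (\ref{pullback}) with the intertwining relation, the twisted-Leibniz computation showing $D\circ\phi^{*}$ and $\phi^{*}\circ\delta$ are both derivations into $\Kx$ viewed as a module via $\phi^{*}$, and the composition bookkeeping in the functoriality claim --- so your write-up is, if anything, more complete than the paper's, at the modest cost of the extra twisted-derivation lemma.
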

\begin{proof}
For a vector field $\xi\colon \An \to T\An$ and $a\in\An$  we have 
$(d\phi \circ \xi)_{a}=\Jac(\phi)_{a} \cdot \xi_{a}$. Thus, for a given $\delta\in\VF(\An)$, the equation $(d\phi \circ \tilde\delta)_{a} = (\delta \circ \phi)_{a}= \delta_{\phi(a)}$ has the unique solution
\[
\tilde\delta_{a}:= (\Jac(\phi)_{a})^{-1}\cdot \delta_{\phi(a)}.
\]
Since the Jacobian determinant $\det(\Jac(\phi))$ is a non-zero constant, the inverse matrix $\Jac(\phi)^{-1}$ has entries in $\Kx$. Therefore, the vector field $\tilde\delta$ is polynomial. This proves the first claim of the proposition, and the others follow immediately.
\end{proof}
\begin{rem}\lab{etale.rem}
In the notation of the proposition above, let $\phi=(f_{1},\ldots,f_{n})$ and  put $\delta = \di$ in equation~(\ref{pullback}). Then
$$
\di =  \sum_{j}\frac{\partial f_{k}}{\partial x_{j}}\,\phi^{*}(\dd{j}),
$$
Applying both sides to $f_{k}$, we get $\phi^{*}(\dd{j})(f_{k})= \delta_{jk}$.
\end{rem}
\begin{prop}
Let $\eta\colon \An \to \An$ be an \'etale morphism. If
$\eta^{*}\colon \VF(\An) \to \VF(\An)$ is an isomorphism, then $\eta$ is an automorphism of $\An$.
\end{prop}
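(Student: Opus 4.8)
The plan is to reduce the statement to the already-proved Theorem~\ref{thm1} together with the rigidity of \'etale maps recorded in Remark~\ref{etale.rem}. Since $\eta$ is \'etale, the previous proposition guarantees that $\eta^{*}$ is an injective homomorphism of Lie algebras; by hypothesis it is in fact bijective, so $\eta^{*}\in\AutL(\VF(\An))$. Theorem~\ref{thm1} then produces a (unique) $\psi\in\Aut(\An)$ with $\eta^{*}=\Ad(\psi)$. Using the identity $\phi^{*}=\Ad(\phi^{-1})$ noted at the start of this section and setting $\sigma:=\psi^{-1}\in\Aut(\An)$, we obtain $\sigma^{*}=\Ad(\psi)=\eta^{*}$ as operators on $\VF(\An)$. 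In this way the problem is reduced to comparing two \'etale maps, $\eta$ and the honest automorphism $\sigma$, which induce the same pull-back on vector fields.

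Next I would recover $\eta$ from $\eta^{*}$ coordinate by coordinate. Write $\eta=(f_{1},\ldots,f_{n})$ and $\sigma=(g_{1},\ldots,g_{n})$, and set $D_{j}:=\eta^{*}(\dd{j})=\sigma^{*}(\dd{j})$. Applying Remark~\ref{etale.rem} to each of the two \'etale maps separately gives $D_{j}(f_{k})=\delta_{jk}=D_{j}(g_{k})$, and hence $D_{j}(f_{k}-g_{k})=0$ for all $j,k$. So the differences $f_{k}-g_{k}$ lie in the common kernel of the derivations $D_{1},\ldots,D_{n}$ on $\Kx$.

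The key point, and the step I expect to require the most care, is that this common kernel consists only of the constants $K$; this is exactly where I use that $\eta^{*}=\Ad(\psi)$ is a genuine automorphism rather than merely an injective homomorphism. Since $D_{j}=(\psi^{*})^{-1}\circ\dd{j}\circ\psi^{*}$, a function $h$ is annihilated by every $D_{j}$ if and only if $\psi^{*}(h)$ is annihilated by every $\dd{j}$, i.e. if and only if $\psi^{*}(h)\in K$; and because $\psi^{*}$ is an algebra automorphism of $\Kx$, this holds precisely when $h\in K$. Therefore $f_{k}-g_{k}\in K$ for each $k$, so $\eta=t\circ\sigma$ where $t$ is the translation by the constant vector $(f_{1}-g_{1},\ldots,f_{n}-g_{n})$. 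As both $t$ and $\sigma$ belong to $\Aut(\An)$, so does $\eta$, which is the desired conclusion.
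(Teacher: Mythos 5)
Your proof is correct, and it shares the paper's main strategy: both invoke Theorem~\ref{thm1} to produce an automorphism realizing $\eta^{*}$, and both conclude by showing that $\eta$ differs from an honest automorphism only by a translation. The mechanics of the last step differ, though. The paper composes: with $\eta^{*}=\Ad(\phi)$ it sets $\psi:=\phi\circ\eta$, so that $\psi^{*}=\id$ on $\VF(\An)$, and then evaluates the pointwise formula $\delta_{a}=(\Jac(\psi)_{a})^{-1}\cdot\delta_{\psi(a)}$ on the constant fields $\dd{i}$ to get $\Jac(\psi)_{a}=E$ (the identity matrix) for all $a$, forcing $\psi$ to be a translation. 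You avoid composing and stay at the level of the coordinate ring: comparing $\eta=(f_{1},\ldots,f_{n})$ with the automorphism $\sigma=\psi^{-1}$, you use Remark~\ref{etale.rem} for each map separately to get $D_{j}(f_{k}-g_{k})=0$, and then the key observation that the common kernel of the derivations $D_{j}=(\psi^{*})^{-1}\circ\dd{j}\circ\psi^{*}$ is exactly $K$, since $\psi^{*}$ is an algebra automorphism of $\Kx$; that step is correct and is indeed where the hypothesis that $\eta^{*}$ is an isomorphism (not merely injective) enters. Your route trades the paper's geometric Jacobian computation, and its implicit use of the functoriality $(\eta\circ\phi)^{*}=\phi^{*}\circ\eta^{*}$, for a purely algebraic kernel argument; the paper's finish is marginally shorter, since $\Jac(\psi)\equiv E$ pins down $\psi$ in one line, while yours makes the translation discrepancy $\eta=t\circ\sigma$ explicit without ever needing the pointwise formula.
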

\begin{proof} Theorem~\ref{thm1} implies that $\eta^{*}= \Ad(\phi)$ for some automorphism $\phi$ of $\An$. Since $\phi^{*}= \Ad(\phi^{-1})$, it follows that $\psi:=\phi\circ\eta$ is \'etale and that $\psi^{*}$ is the identity on $\VF(\An)$. We claim that this implies that $\psi = \id$ which will prove the proposition. 

By definition, we have $\delta_{a}= (\Jac(\psi)_{a})^{-1}\cdot \delta_{\psi(a)}$ for every vector field $\delta$. 
Putting $\delta = \dd{i}$ for $i=1,\ldots,n$, we get $\Jac(\psi)_{a} = E$ for all $a\in\An$ which implies that $\psi$ is a translation.
\end{proof}

As a corollary of the two propositions above, we get the following result which is due to \name{Kulikov} \cite[Theorem~4]{Ku1992Generalized-and-lo}.

\begin{cor}\lab{cor1}
If every injective endomorphism of the Lie algebra $\VF(\An)$ is an automorphism, then the Jacobian Conjecture holds in dimension $n$.
\end{cor}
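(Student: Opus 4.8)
The plan is to deduce the statement formally from the two preceding propositions on \'etale morphisms, since the substantive work has already been carried out there. The first step is to translate the hypothesis of the Jacobian Conjecture into the language of vector fields. Recall that the Jacobian Conjecture in dimension $n$ asserts that every polynomial map $\phi\colon\An\to\An$ with $\det(\Jac(\phi))\in K^{*}$ is an automorphism of $\An$. Since $K$ has characteristic zero, the condition $\det(\Jac(\phi))\in K^{*}$ is precisely the condition that $\phi$ be \'etale; hence it suffices to prove, under the given hypothesis, that every \'etale map $\phi\colon\An\to\An$ is an automorphism.

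So let $\phi\colon\An\to\An$ be \'etale. The first proposition then guarantees that the pull-back $\phi^{*}\colon\VF(\An)\to\VF(\An)$ is a well-defined \emph{injective} endomorphism of the Lie algebra $\VF(\An)$. At this point I would invoke the hypothesis of the corollary: since every injective endomorphism of $\VF(\An)$ is assumed to be an automorphism, $\phi^{*}$ is in fact an isomorphism of $\VF(\An)$.

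Finally, the second proposition applies directly: $\phi$ is \'etale and $\phi^{*}$ is an isomorphism, whence $\phi$ is an automorphism of $\An$. This establishes the Jacobian Conjecture in dimension $n$ and completes the proof.

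I do not expect a genuine obstacle at this stage, as the corollary is a purely formal consequence of the two propositions. The only points requiring attention are the equivalence, valid in characteristic zero, between ``$\det(\Jac(\phi))$ is a nonzero constant'' and ``$\phi$ is \'etale'', and the observation that the first proposition genuinely supplies the injectivity of $\phi^{*}$ needed to feed into the hypothesis. The real content of the argument lies upstream, in the proof that an \'etale self-map inducing a Lie-algebra isomorphism must itself be invertible, which in turn rests on Theorem~\ref{thm1}.
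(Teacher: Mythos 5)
Your proposal is correct and is precisely the argument the paper intends: the corollary is stated there as an immediate consequence of the two propositions, with $\phi^{*}$ injective by the first, an automorphism by hypothesis, and $\phi$ invertible by the second. Your added remark that \'etaleness of a polynomial self-map of $\An$ in characteristic zero is equivalent to $\det(\Jac(\phi))\in K^{*}$ correctly fills in the one translation step the paper leaves implicit.
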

%\end{rem}

We finish this section by showing that if the divergence of a vector field is a constant, then it does not change under an \'etale morphism. More general, we have the following result.
\begin{prop}\lab{etaleVFzero.prop}
Let $\eta\colon\An \to \An$ be an \'etale morphism, and $\delta$ a vector field. Then $\Div\eta^{*}(\delta) = \eta^{*}(\Div\delta)$. In particular, 
$\delta \in \VFc(\An)$ if and only if $\eta^{*}(\delta)\in\VFc(\An)$, and in this case we have  $\Div \eta^{*}(\delta) = \Div \delta$.
\end{prop}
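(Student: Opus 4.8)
The plan is to establish the identity $\Div\eta^*(\delta) = \eta^*(\Div\delta)$ by a direct computation in local coordinates, using the explicit formula for $\eta^*(\delta)$ from the first proposition of this section together with the defining equation~(\ref{pullback}) and its consequence in Remark~\ref{etale.rem}. Write $\eta = (f_1,\ldots,f_n)$ and expand an arbitrary $\delta = \sum_i g_i \dd{i}$. The key structural input is that, by Remark~\ref{etale.rem}, the pulled-back fields $\eta^*(\dd{j})$ form a basis dual to the $f_k$ in the sense that $\eta^*(\dd{j})(f_k) = \delta_{jk}$; equivalently, $\eta^*$ is determined by $\eta^*(\dd{i}) = \sum_j h_{ij}\,\dd{j}$ where $(h_{ij})$ is the inverse of the Jacobian matrix $\Jac(\eta)$.

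First I would reduce to the case $\delta = g\,\dd{i}$ by linearity, and then compute $\eta^*(g\,\dd{i})$ explicitly. Since $\eta^*$ is a homomorphism for the module structure via the co-morphism, one has $\eta^*(g\,\dd{i}) = \eta^*(g)\,\eta^*(\dd{i}) = (g\circ\eta)\sum_j h_{ij}\,\dd{j}$. Taking the divergence then produces two types of terms: one where $\dd{j}$ hits the scalar $(g\circ\eta)$, and one where $\dd{j}$ hits the coefficient $h_{ij}$. The first group of terms, after summing over $i$ and $j$, should reassemble via the chain rule into $\eta^*(\Div\delta) = (\Div\delta)\circ\eta$, using that $\sum_j h_{ij}\,\dd{j}(g\circ\eta) = \dd{i}(g)\circ\eta$ precisely because $(h_{ij})$ inverts the Jacobian. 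The main obstacle is therefore the second group of terms: I expect to need the identity $\sum_{i,j}\dd{j}(h_{ij}) = 0$, i.e.\ that the rows of the inverse Jacobian are divergence-free in this combined sense. This is the crux of the argument.

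To dispatch this obstacle I would invoke the classical fact that for a matrix $M = \Jac(\eta)$ with \emph{constant} determinant (which holds here since $\eta$ is \'etale, so $\det\Jac(\eta)\in K^*$), the cofactor identity $\sum_i \dd{j}(\operatorname{Adj}(M)_{ij}) = 0$ holds for each $j$ — this is the standard Piola-type identity underlying the fact that cofactor vector fields are automatically divergence-free. Since $h_{ij} = \operatorname{Adj}(M)_{ji}/\det M$ and $\det M$ is a nonzero constant, the vanishing of $\sum_{i,j}\dd{j}(h_{ij})$ follows directly. Granting this, the two claimed consequences are immediate: $\Div\eta^*(\delta) = \eta^*(\Div\delta)$ shows that $\Div\delta$ is constant if and only if $\eta^*(\Div\delta) = \Div\eta^*(\delta)$ is constant, and since $\eta^*$ fixes scalars ($\eta^*(c) = c\circ\eta = c$ for $c\in K$) we get $\Div\eta^*(\delta) = \Div\delta$ whenever $\delta\in\VFc(\An)$.
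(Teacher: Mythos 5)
Your argument is correct and leads to the stated identity, but it is built on a different key lemma than the paper's. The paper never writes $\eta^{*}(\delta)$ entrywise through the inverse Jacobian: it differentiates the defining relation $h_{k}\circ\eta=\sum_{i}\tilde h_{i}\,\frac{\partial f_{k}}{\partial x_{i}}$ with respect to $x_{j}$, obtains the matrix identity $H(f_{1},\ldots,f_{n})\cdot\Jac(\eta)=\tilde H\cdot\Jac(\eta)+\sum_{i}\tilde h_{i}\,\dd{i}\Jac(\eta)$, multiplies on the right by $\Jac(\eta)^{-1}$ and takes traces; its error term $\sum_{i}\tilde h_{i}\tr\bigl(\dd{i}\Jac(\eta)\cdot\Jac(\eta)^{-1}\bigr)$ vanishes by the Jacobi-type formula $\tr\bigl(\frac{d}{dt}A\cdot\Adj(A)\bigr)=\frac{d}{dt}\det A$ (Lemma~\ref{strange.lem}), which holds for an \emph{arbitrary} differentiable matrix $A$, combined with constancy of $\det\Jac(\eta)$. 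You instead expand $\eta^{*}(g\,\dd{i})=(g\circ\eta)\,\eta^{*}(\dd{i})$ via Leibniz and kill the error term with the Piola identity (divergence-freeness of the rows of the adjugate), which — unlike Jacobi's formula — is special to Jacobian matrices: its proof uses the symmetry of mixed partials, and it holds with no constancy assumption, constancy of the determinant entering only when you divide $\Adj(\Jac(\eta))$ by $\det\Jac(\eta)$ to get the inverse. Both are coordinate computations of comparable length; your version makes the chain-rule mechanism and the module property of $\eta^{*}$ explicit, while the paper's version avoids entrywise formulas for $\Jac(\eta)^{-1}$ at the cost of a matrix-trace manipulation.

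Two small slips that should be repaired, though neither is fatal. First, the identity you actually need is the row-wise statement $\sum_{j}\dd{j}(h_{ij})=0$ \emph{for each fixed} $i$, not merely the double sum $\sum_{i,j}\dd{j}(h_{ij})=0$: the second group of terms is $\sum_{i}(g_{i}\circ\eta)\sum_{j}\dd{j}(h_{ij})$, with coefficients $g_{i}\circ\eta$ depending on $i$, so the double sum alone does not suffice. Fortunately, the Piola identity you invoke is precisely the row-wise statement, so the proof closes once you say so. Second, your cofactor identity has an index typo: as written, $\sum_{i}\dd{j}\bigl(\Adj(M)_{ij}\bigr)=0$ fixes the derivative index $j$ and is false in general; it should read $\sum_{i}\dd{i}\bigl(\Adj(M)_{ij}\bigr)=0$ for each $j$ (equivalently $\sum_{j}\dd{j}\bigl(\Adj(M)_{ji}\bigr)=0$ for each $i$), which is what matches $h_{ij}=\Adj(M)_{ji}/\det M$. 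Finally, in the "in particular" part, the direction "$\Div\eta^{*}(\delta)$ constant $\Rightarrow$ $\Div\delta$ constant" uses that $\eta$ is dominant, so that $g\circ\eta\in K$ forces $g\in K$; the paper glosses this too, but a one-line remark would make your write-up complete.
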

\begin{proof}
Set
$\eta=(f_{1},\ldots,f_{n})$, $\delta = \sum_{j}h_{j}\dx{j}$ and $\eta^{*}(\delta) = \sum_{j}\tilde h_{j}\dx{j}$. Then
$$
h_{k}(f_{1},\ldots,f_{n}) = \sum_{i}\tilde h_{i}\frac{\partial f_{k}}{\partial x_{i}} \text{ for } k = 1,\ldots,n.
$$
Applying $\dx{j}$ to the left hand side we get the matrix
$$
\left(\sum_{i}\frac{\partial h_{k}}{\partial x_{i}}(f_{1},\ldots,f_{n}) \frac{\partial f_{i}}{\partial x_{j}}\right)_{(k,j)} = H(f_{1},\ldots,f_{n})\cdot\Jac(\eta)
$$
where $H: = \Jac(h_{1},\ldots,h_{n})$. On the right hand side, we obain similarly
$$
\left(\sum_{i}\frac{\partial \tilde h_{i}}{\partial x_{j}} \frac{\partial f_{k}}{\partial x_{i}} + 
\sum_{i}\tilde h_{i} \frac{\partial^{2}f_{k}}{\partial x_{i}\partial x_{j}}\right)_{(k,j)}=
\tilde H \cdot \Jac(\eta) + \sum_{i} \tilde h_{i} \dx{i} \Jac(\eta) 
$$
Multiplying this matrix equation from the right with $\Jac(\eta)^{-1}$ we finally get
$$
H(f_{1},\ldots,f_{n}) = \tilde H + \sum_{i} \tilde h_{i} \dx{i} \Jac(\eta)\cdot\Jac(\eta)^{-1}
$$
Now we take on both sides the traces. Using Lemma~\ref{strange.lem} below and the obvious equalities
$\Div \delta = \tr H$ and $\Div \tilde \delta = \tr \tilde H$, we finally get
$$
 \Div \tilde\delta = (\Div \delta)(f_{1},\ldots,f_{n}) = \eta^{*}(\Div\delta).
$$
The claim follows.
\end{proof}
\begin{lem}\lab{strange.lem}
Let $A$ be an $n\times n$ matrix whose entries $a_{ij}(t)$ are differentiable function in $t$. Then
$$
\tr \left( \frac{d}{dt} A \cdot \Adj(A)\right) = \frac{d}{dt} \det A.
$$
\end{lem}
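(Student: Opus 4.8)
The plan is to reduce the statement to the classical fact that the partial derivative of the determinant with respect to a single matrix entry equals the corresponding cofactor, which is in turn an entry of the adjugate. Write $A = (a_{ij}(t))$ and denote by $C_{ij}$ the $(i,j)$-cofactor of $A$, i.e. $(-1)^{i+j}$ times the determinant of the submatrix obtained by deleting row $i$ and column $j$. Recall that $\Adj(A)$ is by definition the transpose of the cofactor matrix, so that $\Adj(A)_{ji} = C_{ij}$; keeping this transpose straight is really the only thing one has to be careful about.

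First I would establish the key identity $\partial(\det A)/\partial a_{ij} = C_{ij}$. This follows from the Laplace expansion $\det A = \sum_{k} a_{ik} C_{ik}$ along the $i$-th row, combined with the observation that every cofactor $C_{ik}$ is a polynomial in the entries of $A$ that does not involve any entry of the $i$-th row. Hence differentiating the expansion with respect to $a_{ij}$ annihilates all the factors $C_{ik}$ and leaves only the single term $C_{ij}$.

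With this in hand, the chain rule gives
\[
\frac{d}{dt}\det A = \sum_{i,j} \frac{\partial \det A}{\partial a_{ij}}\cdot \frac{d a_{ij}}{dt} = \sum_{i,j} C_{ij}\,\frac{d a_{ij}}{dt}.
\]
On the other hand, writing $B := \tfrac{d}{dt}A$ with entries $b_{ij} = \tfrac{d a_{ij}}{dt}$, the $(i,i)$ diagonal entry of $B\cdot\Adj(A)$ is $\sum_{j} b_{ij}\,\Adj(A)_{ji} = \sum_{j} \tfrac{d a_{ij}}{dt}\,C_{ij}$, and summing over $i$ yields precisely the right-hand side of the displayed equation. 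Comparing the two expressions proves the lemma.

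The computation is entirely formal: everything in sight is polynomial in the entries $a_{ij}$, and these are differentiable in $t$, so there is no genuine analytic obstacle and the only work is the index bookkeeping in the step above. As an alternative to the chain-rule argument, one could instead invoke the multilinearity of $\det$ in the rows of $A$, differentiating row by row and expanding each resulting determinant along the differentiated row; this reproduces the same formula $\tfrac{d}{dt}\det A = \sum_{i,j} C_{ij}\,\tfrac{d a_{ij}}{dt}$ and hence the same conclusion.
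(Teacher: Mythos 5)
Your proof is correct. Note that the paper itself gives no proof of this lemma --- it is explicitly ``left to the reader'' as an exercise in linear algebra --- so there is no argument to compare against; your solution is the standard one (this is Jacobi's formula), resting on the identity $\partial(\det A)/\partial a_{ij} = C_{ij}$, the chain rule, and correct bookkeeping of the transpose in $\Adj(A)_{ji} = C_{ij}$, all of which you handle accurately.
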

\noindent
The proof is a nice exercise in linear algebra which we leave to the reader!

\par\medskip
%%%%%%%%%%%%%%%%%%%%%%%%%%%%%%%%%%%%%%%%%%
\section{Proof of the Main Theorem, part II}
We have seen that the canonical map $\Ad\colon\Aut(\An) \to \AutL(\VF(\An))$ is an isomorphism (Theorem~\ref{thm1}). It follows from Proposition~\ref{etaleVFzero.prop} that every automorphism of $\VF(\An)$ induces an automorphism of $\VFc(\An)$. Moreover, since $\VFzero(\An) = [\VFc(\An),\VFc(\An)]$ by Lemma~\ref{veczero.lem}, we get a canonical map $\AutL(\VFc(\An)) \to \AutL(\VFzero(\An))$ which is easily seen to be injective. Thus the main theorem from section~1 follows from the next result.
\begin{thm}\lab{thm2}
The canonical map $\Ad\colon\Aut(\An) \to \AutL(\VFzero(\An))$ is an isomorphism.
\end{thm}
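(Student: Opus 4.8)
The plan is to follow the architecture of the proof of Theorem~\ref{thm1}, using that the relevant ``coordinate'' subalgebras all lie inside $\VFzero(\An)$: the translations $L(S)=\langle\dd{1},\ldots,\dd{n}\rangle$, their normalizer, and $L(\SL_n)$ all consist of divergence-free fields, whereas the Euler field $E$ does \emph{not} lie in $\VFzero(\An)$. Injectivity of $\Ad$ is clear, so the task is surjectivity: given $\theta\in\AutL(\VFzero(\An))$ I want to produce automorphisms of $\An$ whose $\Ad$ trivializes $\theta$ step by step.

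First I would look at $\u:=\theta(L(S))$. As in Theorem~\ref{thm1}, the adjoint action of each $\dd{i}$ is locally nilpotent on $\VFzero(\An)$ (it is the restriction to the invariant subalgebra of the locally nilpotent action on $\VF(\An)$), hence so is the adjoint action of every element of $\u$. To invoke Lemma~\ref{unipotent.lem} I must upgrade this to: every element of $\u$ is a locally nilpotent \emph{vector field}. Granting this, Lemma~\ref{unipotent.lem} yields a commutative unipotent $U\subset\Aut(\An)$ of dimension $n$ with $L(U)=\u$. For transitivity I note that the centralizer computation can be done inside $\VFzero(\An)$: one checks $\cent_{\VFzero(\An)}(L(S))=L(S)$ (only constant fields commute with all $\dd{i}$, and these are divergence-free), hence $\cent_{\VFzero(\An)}(\u)=\u$. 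If $U$ were not transitive there would be a non-constant $U$-invariant $f$, and for $\delta\in\u$ each $f^{k}\delta$ would satisfy $\Div(f^{k}\delta)=\delta(f^{k})+f^{k}\Div\delta=0$, so $f^{k}\delta\in\cent_{\VFzero(\An)}(\u)$; the infinitely many linearly independent such fields contradict $\cent_{\VFzero(\An)}(\u)=\u$. Thus $U$ acts transitively and, exactly as in Theorem~\ref{thm1}, there is $\phi\in\Aut(\An)$ with $\phi U\phi^{-1}=S$, so after replacing $\theta$ by $\Ad(\phi)\circ\theta$ I may assume $\theta(L(S))=L(S)$.

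Next I would use that $L(S)$ determines its normalizer $\norm_{\VFzero(\An)}(L(S))=\saff_n$: writing $\eta=\sum g_{l}\dd{l}$, the condition $[\eta,\dd{i}]\in L(S)$ for all $i$ forces the $g_{l}$ to be affine, and $\Div\eta=0$ forces the linear part to be trace-free, i.e. $\eta\in\saff_n$. Hence $\theta(\saff_n)=\saff_n$, and Lemma~\ref{AutAff.lem} provides $\tau\in\Aff_n$ with $\theta|_{\saff_n}=\Ad(\tau)|_{\saff_n}$. Replacing $\theta$ by $\Ad(\tau^{-1})\circ\theta$, I may assume $\theta$ is the identity on $\saff_n$, in particular on $L(\SL_n)$. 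Now Lemma~\ref{endoVec.lem} (in its $\VFzero(\An)$ form) applies and gives $\theta=\Ad(\lambda E)$ for some $\lambda\in K^{*}$, where $\Ad(\lambda E)$ is conjugation by the scaling $x\mapsto\lambda x\in\Aut(\An)$ and makes sense on $\VFzero(\An)$ even though $E\notin\VFzero(\An)$. Tracing back, $\theta$ lies in the image of $\Ad$, which proves surjectivity.

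The main obstacle is the upgrade used above: for $\delta\in\VFzero(\An)$, local nilpotency of $\ad(\delta)$ on $\VFzero(\An)$ must imply that $\delta$ is a locally nilpotent vector field, i.e. that $\delta$ acts locally nilpotently on $\Kx$. Lemma~\ref{adjoint.lem} delivers this only from local nilpotency of $\ad(\delta)$ on all of $\VF(\An)$. The exact sequence of $\ad(\delta)$-modules
\[
0 \longrightarrow \VFzero(\An) \longrightarrow \VF(\An) \xrightarrow{\ \Div\ } \Kx \longrightarrow 0,
\]
identifies the sought derivation action of $\delta$ on $\Kx$ with the action of $\ad(\delta)$ on the quotient $\VF(\An)/\VFzero(\An)$ (because $\Div[\delta,\eta]=\delta(\Div\eta)$ when $\Div\delta=0$); but since $\VFzero(\An)$ is a sub- and not a quotient-module, local nilpotency does not transfer formally, so the genuine input is the geometry of divergence-free fields. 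For $n=2$ this is transparent: under the Poisson identification $\VFzero(\Atwo)\cong\Kx/K$ the operator $\ad(\delta)$ literally \emph{is} the derivation $\delta$ on functions, so the two notions of local nilpotency coincide. In general I expect to argue along the same lines via the identification of divergence-free fields with exact $(n-1)$-forms and the corresponding Lie-derivative action; this structural transfer of local nilpotency is the delicate point on which the whole argument turns.
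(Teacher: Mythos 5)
You have correctly isolated the one step your argument cannot do without, but that step remains a genuine gap, and it is precisely the obstruction that forces the paper onto a different route. Your plan needs: for $\delta\in\VFzero(\An)$, if $\ad(\delta)$ is locally nilpotent on $\VFzero(\An)$, then $\delta$ is a locally nilpotent derivation of $\Kx$ (otherwise Lemma~\ref{unipotent.lem} cannot be applied to $\u=\theta(L(S))$, and your whole construction of $U$ collapses). Lemma~\ref{adjoint.lem} gives this only from local nilpotency of $\ad(\delta)$ on all of $\VF(\An)$, because its proof multiplies commuting fields by arbitrary functions $f$, and $f\mu$ leaves $\VFzero(\An)$. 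Your proposed repair via differential forms does not go through for $n\geq 3$: contracting with the volume form identifies $\VFzero(\An)$ with exact $(n-1)$-forms and $\ad(\delta)$ with the Lie derivative $L_{\delta}$, but $L_{\delta}$ applied to a form $f\,dx_{i_{1}}\wedge\cdots\wedge dx_{i_{n-1}}$ produces, besides $\delta(f)$, first-order terms involving the partial derivatives of the coefficients of $\delta$; so local nilpotency of $L_{\delta}$ on exact $(n-1)$-forms does not visibly control the iterates $\delta^{m}(f)$. Only for $n=2$ does the identification $\VFzero(\Atwo)\cong\Kx/K$ make $\ad(\delta)$ literally equal to the derivation action, which is why the dimension-$2$ argument you cite does not generalize. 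Equivalently, in the exact sequence you write down, the derivation action sits on the \emph{quotient} $\VF(\An)/\VFzero(\An)\cong\Kx$, and local nilpotency on a subrepresentation never formally passes to a quotient in this infinite-dimensional setting; no proof of the transfer is supplied, and none is known to be easy.

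The paper avoids exactly this issue by never proving that the $\xi_{i}:=\theta(\dd{i})$ are locally nilpotent vector fields. Instead it works throughout with the only hypothesis that is actually inherited, namely that each $\ad(\xi_{i})$ is locally nilpotent \emph{on $\VFzero(\An)$}: Lemma~\ref{noDarboux.lem} is tailored to that weak hypothesis and shows that a common \name{Darboux} polynomial would force $\sum_{i}\Kx\,\xi_{i}$ to have rank $1$, which is excluded by a minimality argument using that $\VFzero(\An)$ is generated by elements commuting with some $\dd{i}$. Then the implication (iv)$\Rightarrow$(i) of Proposition~\ref{CommutingBasis.prop} produces an \'etale morphism $\eta$ with $\eta^{*}(\dd{i})=\xi_{i}$, and applying the same construction to $\theta^{-1}$ and composing shows $\eta$ is in fact an automorphism. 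Your endgame (normalizer of $L(S)$ in $\VFzero(\An)$ equals $\saff_{n}$, then Lemma~\ref{AutAff.lem}, then Lemma~\ref{endoVec.lem} with $\Ad(\lambda E)$ making sense on $\VFzero(\An)$ although $E\notin\VFzero(\An)$) is correct and coincides with the paper's concluding lemma, as does your adaptation of the transitivity argument using $\Div(f^{k}\delta)=0$; but without the local-nilpotency upgrade, the proposal does not reach that stage, so as written it is incomplete where it matters most.
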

The proof needs some preparation. The next proposition is a reformulation of some results from \cite{No1986Commutative-bases-} and \cite{LiDu2012Pairwise-commuting}. For the convenience of the reader we will give a short proof.
\begin{prop}\lab{CommutingBasis.prop}
Let $\xi_{1},\ldots,\xi_{n} \in \VF(\An)$ be pairwise commuting and $K$-linearly independent vector fields. Then the following are equivalent.
\be
\item[(i)] There is an \'etale morphism $\eta\colon \An \to \An$ such that $\eta^{*}(\di) = \xi_{i}$ for all $i$;
\item[(ii)] $\VF(\An) = \bigoplus_{i} \Kx \xi_{i}$;
\item[(iii)] There exist $f_{1},\ldots,f_{n}\in \Kx$ such that $\xi_{i} (f_{j})=\delta_{ij}$;
\item[(iv)] $\xi_{1},\ldots,\xi_{n}$ do not have a common \name{Darboux} polynomial.
\ee
\end{prop}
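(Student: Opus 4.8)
The plan is to expand each field as $\xi_i = \sum_j a_{ij}\dd j$ and collect the coefficients into the matrix $A = (a_{ij}) \in \M_n(\Kx)$, reading each of the four conditions off $A$. The first observation is that (ii) holds precisely when $\det A \in K^{*}$, since $\xi_1, \dots, \xi_n$ form a $\Kx$-basis of the free module $\VF(\An)$ if and only if $A$ is invertible over $\Kx$.

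For (i)$\Leftrightarrow$(iii) I would proceed directly. Given (i) with $\eta = (f_1, \dots, f_n)$, Remark~\ref{etale.rem} yields $\xi_i(f_j) = \eta^{*}(\dd i)(f_j) = \delta_{ij}$, which is (iii). Conversely, given the $f_j$ of (iii), set $\eta := (f_1, \dots, f_n)$ and $J := \Jac(\eta)$, so that $J_{jk} = \dx k f_j$. The relations $\xi_i(f_j) = \delta_{ij}$ read $A\,J^{\mathsf T} = I$ in $\M_n(\Kx)$; hence $\det J \in K^{*}$, so $\eta$ is \'etale, and $J^{-1} = A^{\mathsf T}$. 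Comparing with the pull-back formula~\eqref{equ5}, the coefficient vector of $\eta^{*}(\dd i)$ is the $i$-th column of $J^{-1}$, i.e. the $i$-th row of $A$, which is exactly $\xi_i$; this gives (i). The same identity $A\,J^{\mathsf T} = I$ shows $\det A \in K^{*}$, proving (iii)$\Rightarrow$(ii).

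For (ii)$\Rightarrow$(iii) I would pass to the dual basis of $1$-forms $\omega_1, \dots, \omega_n$ determined by $\omega_j(\xi_i) = \delta_{ij}$, which exists once $\{\xi_i\}$ is a $\Kx$-basis. Since $\xi_i(f) = df(\xi_i)$, condition (iii) is exactly the exactness of each $\omega_j$. The Cartan formula gives
\[
d\omega_j(\xi_p, \xi_q) = \xi_p(\omega_j(\xi_q)) - \xi_q(\omega_j(\xi_p)) - \omega_j([\xi_p,\xi_q]),
\]
whose first two terms vanish because $\omega_j(\xi_\bullet)$ is constant and whose last term vanishes because the $\xi_i$ commute. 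As the $\xi_p \wedge \xi_q$ form a $\Kx$-basis of $\wedge^2 \VF(\An)$, this forces $d\omega_j = 0$; the algebraic Poincar\'e lemma on $\An$ then produces $f_j \in \Kx$ with $\omega_j = df_j$, which is (iii).

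The main obstacle is the equivalence with (iv). For (ii)$\Rightarrow$(iv), suppose $f$ is a nonconstant common Darboux polynomial, $\xi_i(f) = \lambda_i f$. Writing $\dd k = \sum_i c_{ik}\xi_i$ in the basis $\{\xi_i\}$ gives $\dd k(f) = \bigl(\sum_i c_{ik}\lambda_i\bigr) f =: \mu_k f$, so $f$ is a common Darboux polynomial for the coordinate fields; comparing total degrees in $\dd k(f) = \mu_k f$ forces $\mu_k = 0$ and hence $f$ constant, a contradiction. For the converse I would argue contrapositively, assuming $\det A \notin K^{*}$ and exhibiting a common Darboux polynomial. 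The computational crux is the identity $\xi_i(\det A) = (\Div \xi_i)\,\det A$, obtained by applying $\mathcal L_{\xi_i}$ to $\xi_1 \wedge \dots \wedge \xi_n = (\det A)\,\dd1 \wedge \dots \wedge \dd n$: the left-hand side vanishes since the fields commute, while $\mathcal L_{\xi_i}(\dd1 \wedge \dots \wedge \dd n) = -(\Div \xi_i)\,\dd1 \wedge \dots \wedge \dd n$. When $\det A$ is nonzero (hence nonconstant) it is therefore itself a common Darboux polynomial. The genuinely delicate case is $\det A = 0$, where the $\xi_i$ are dependent over the fraction field but independent over $K$; here I expect the real work, extracting a nonconstant common Darboux polynomial from the Pl\"ucker coordinates of the relation module, which is $\xi_j$-stable by commutativity and whose non-triviality is guaranteed by the $K$-linear independence. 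This degenerate case is exactly the content reformulated from \cite{No1986Commutative-bases-} and \cite{LiDu2012Pairwise-commuting}.
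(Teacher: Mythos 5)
Your proposal reproduces most of the paper's actual proof, sometimes in invariant rather than coordinate language: your dual-basis/Cartan-formula computation for (ii)$\Rightarrow$(iii) is exactly the paper's symmetry-of-partials step $\fp{h_{ik}}{j}=\fp{h_{jk}}{i}$ followed by the same algebraic Poincar\'e lemma; your identity $\xi_i(\det A) = (\Div \xi_i)\det A$, obtained via the Lie derivative of $\xi_1\wedge\dots\wedge\xi_n$, is the paper's ``easy calculation'' in step (d); and your direct (iii)$\Leftrightarrow$(i) via $A\,J^{\mathsf T}=I$ is a harmless reshuffling of the paper's cycle of implications. All of that is correct. The genuine gap is exactly where you flag it: in (iv)$\Rightarrow$(ii) you handle only the case $\det A$ nonzero, and for $\det A = 0$ you offer an expectation (``Pl\"ucker coordinates of the relation module'') plus a citation to \cite{No1986Commutative-bases-} and \cite{LiDu2012Pairwise-commuting}. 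But the proposition is stated and used as a self-contained result (the paper explicitly says it gives a proof ``for the convenience of the reader''), and your sketch does not identify which polynomial serves as the common \name{Darboux} polynomial nor why it is nonconstant, so this case is not proved.

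The paper closes it in a few lines, with tools you already have. If $\det A = 0$, let $r = \rk\bigl(\sum_{i=1}^{n}\Kx\,\xi_{i}\bigr) < n$ and reorder so that $\xi_{1},\ldots,\xi_{r}$ are independent over the fraction field $F$ of $\Kx$; choose a nontrivial relation $\sum_{i=1}^{r+1} f_{i}\xi_{i}=0$ with $\gcd(f_{1},\ldots,f_{r+1})=1$. Applying $\ad(\xi_{j})$ and using commutativity yields a second syzygy $\sum_{i=1}^{r+1}\xi_{j}(f_{i})\,\xi_{i}=0$. Since the syzygies of $(\xi_{1},\ldots,\xi_{r+1})$ over $F$ form a one-dimensional space spanned by $(f_{1},\ldots,f_{r+1})$, we get $\xi_{j}(f_{i}) = q_{j} f_{i}$ for all $i$ with a single $q_{j}\in F$, and coprimality of the $f_{i}$ forces $q_{j}\in\Kx$ (Gauss). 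If all $f_{i}$ were constant, the relation $\sum_{i} f_{i}\xi_{i}=0$ would contradict the $K$-linear independence of the $\xi_{i}$; hence some nonconstant $f_{i}$ is a common \name{Darboux} polynomial, contradicting (iv). Note also a small misattribution in your sketch: the nontriviality of the relation module comes from $\det A = 0$ itself, while $K$-linear independence is what guarantees that the extracted polynomial is nonconstant.
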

\noindent
Recall that a common \name{Darboux} polynomial of the $\xi_{i}$ is a non-constant $f \in \Kx$ such that for all $i$ we have $\xi_{i}(f) = h_{i}f$ for some $h_{i}\in \Kx$.
\begin{proof} 
(a) It follows from Remark~\ref{etale.rem} that (i) implies (ii) and (iii). Clearly, (ii) implies (iv) since a common \name{Darboux} polynomial for the $\xi_{i}$ is also a common \name{Darboux} polynomial for the $\dd{i}$ which does not exist.
\par\smallskip
(b) We now show that (ii) implies (i), hence (iii), using the following well-known fact. If $h_{1},\ldots,h_{n}\in \Kx$ satisfy the conditions $\frac{\partial h_{i}}{\partial x_{j}} = \frac{\partial h_{j}}{\partial x_{i}}$ for all $i,j$, then there is an $f \in \Kx$ such that $h_{i}=\frac{\partial f}{\partial x_{i}}$ for all $i$.

By (ii) we have $\dd{i} = \sum_{k}h_{ik}\xi_{k}$ for $i=1,\ldots,n$. The relations $[\dd{i},\dd{j}]=0$ imply that $\fp{h_{ik}}{j}=\fp{h_{jk}}{i}$ for all ${i,j,k}$, hence $h_{ik}= \fp{f_{k}}{i}$ for suitable $f_{1},\ldots,f_{n}\in\Kx$. It is clear that the matrix $(h_{ik})$ is invertible, hence the morphism $\phi:=(f_{1},\ldots,f_{n})\colon \An \to \An$ is \'etale, and the claim follows from Remark~\ref{etale.rem}.
\par\smallskip
(c) Assume that (iii) holds. Setting $\xi_{i} = \sum_{k} h_{ik}\dd{k}$ and applying both sides to $f_{j}$, we see that the matrix $(h_{ik})$ is invertible, hence (ii). Thus the first three statements of the  proposition are equivalent.
\par\smallskip
(d) Finally, assume that (iv) holds. Put $\xi_{i} = \sum_{k} h_{ik}\dd{k}$. Since $[\xi_{i},\xi_{j}]=0$ we get $\xi_{i}(h_{jk}) = \xi_{j}(h_{ik})$ for all $i,j,k$. Now an easy calculation shows that $\xi_{k}(\det(h_{ij})) = \Div(\xi_{k}) \det(h_{ij})$, and so $\det(h_{ij})\in K$. If $\det(h_{ij})\neq 0$, then (ii) follows. 

If $\det(h_{ij})= 0$, then $\rk (\sum_{i=1}^{n}\Kx\xi_{i}) = r<n$, and we can assume that $\rk (\sum_{i=1}^{r}\Kx\xi_{i}) = r$. Choose a non-trivial relation $\sum_{i=1}^{r+1}f_{i}\xi_{i}=0$ where $\gcd(f_{1},\ldots,f_{r+1}) = 1$. Since $0=\xi_{j}(\sum_{i=1}^{r+1}f_{i}\xi_{i})=\sum_{i=1}^{r+1}\xi_{j}(f_{i})\xi_{i}$ we see that 
$\xi_{j}(f_{i})\in\Kx f_{i}$, and since the $\xi_{j}$ are $K$-linearly independent at least one of the $f_{i}$ is not a constant, contradicting (iv).
\end{proof}
The second main ingredient for the proof is the following result.
\begin{lem}\lab{noDarboux.lem}
Let $\xi_{1},\xi_{2}\in\VFzero(\An)$ be commuting vector fields. Assume that
\be
\item $\xi_{1}$ and $\xi_{2}$ have a common \name{Darboux} polynomial $f$;
\item Each $\xi_{i}$ acts locally nilpotently on $\VFzero(\An)$.
\ee
Then $\Kx \xi_{1} + \Kx \xi_{2} \subset \VF(\An)$ is a submodule of rank $\leq 1$.
\end{lem}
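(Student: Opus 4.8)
The plan is to argue by contradiction. Writing $\xi_i=\sum_k h_{ik}\dd{k}$, the assertion ``$\Kx\xi_1+\Kx\xi_2$ has rank $\le 1$'' is equivalent to the vanishing of all $2\times 2$ minors $h_{1k}h_{2l}-h_{1l}h_{2k}$, i.e. to $\xi_1\wedge\xi_2=0$, which is the same as saying that $\xi_1$ and $\xi_2$ are proportional over the fraction field $K(x_1,\ldots,x_n)$. So I would assume that $M:=\Kx\xi_1+\Kx\xi_2$ has rank $2$ and aim at a contradiction with hypothesis (b).

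First I would draw out the purely algebraic consequence of hypothesis (a). Writing $\xi_i(f)=h_if$ with $h_i\in\Kx$, applying $\xi_1$ to $\xi_2(f)=h_2f$ and $\xi_2$ to $\xi_1(f)=h_1f$ and using $[\xi_1,\xi_2]=0$ gives the relation $\xi_1(h_2)=\xi_2(h_1)$. Since $\Div(h_i\xi_j)=h_i\Div\xi_j+\xi_j(h_i)=\xi_j(h_i)$ (because $\Div\xi_j=0$), this relation says exactly that the field $\zeta:=h_1\xi_2-h_2\xi_1$ lies in $\VFzero(\An)$; moreover $\zeta(f)=h_1h_2f-h_2h_1f=0$, so $f$ is a first integral of $\zeta$ and every $f^k\zeta$ again lies in $\VFzero(\An)$. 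Note that $\zeta\in M$, and that if the cofactors $(h_1,h_2)$ are not both zero then $\zeta=0$ already forces $\xi_1$ and $\xi_2$ to be proportional. Thus the heart of the matter is to show, using local nilpotence, that $\zeta=0$, while separately ruling out rank $2$ in the degenerate case $h_1=h_2=0$ where $f$ is a common first integral.

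For the local nilpotence I would exploit Lemma~\ref{adjoint.lem} in the form $\ad(\xi_i)(p\,\eta)=\xi_i(p)\,\eta$ for $[\xi_i,\eta]=0$. Taking $\eta=\xi_j$ and $p$ a $\xi_j$-invariant places $p\xi_j$ in $\VFzero(\An)$ (since $\Div(p\xi_j)=\xi_j(p)=0$) and gives $\ad(\xi_i)^m(p\xi_j)=\xi_i^m(p)\,\xi_j$, so hypothesis (b) forces $\xi_i$ to act locally nilpotently on the ring of $\xi_j$-invariants. In the same spirit I would use the eigen-field identity $\ad(\xi_i)(f\xi_j)=\xi_i(f)\xi_j=h_i\,f\xi_j$, which iterates to $\ad(\xi_i)^m(f\xi_j)=\bigl((\xi_i+h_i)^m(1)\bigr)\,f\xi_j$; here, however, $\Div(f\xi_j)=h_jf$, so $f\xi_j$ lands in $\VFzero(\An)$ precisely when the \emph{other} cofactor vanishes, and only then does (b) apply and force $(\xi_i+h_i)^m(1)=0$ for large $m$. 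The strategy is to combine such vanishing statements — which, via a degree argument along the (locally nilpotent) $\xi_i$, pin down the cofactors — with the family $f^k\zeta\subset\VFzero(\An)$ to conclude $\zeta=0$, hence proportionality.

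The main obstacle is exactly the interface between the two hypotheses: local nilpotence is assumed only on the subalgebra $\VFzero(\An)$, not on all of $\VF(\An)$, so every auxiliary vector field produced in the argument must first be checked to be divergence-free before (b) can be invoked. This is why the divergence bookkeeping that places $\zeta$ and the eigen-fields $f\xi_j$ inside $\VFzero(\An)$ is not a formality but the crux, and why the relation $\xi_1(h_2)=\xi_2(h_1)$ is indispensable. I expect the truly delicate point to be promoting ``$\ad(\xi_i)$ locally nilpotent on $\VFzero(\An)$'' into strong enough control of $\xi_i$ as a derivation of $\Kx$ — enough to collapse the cofactors and kill the bivector $\xi_1\wedge\xi_2$ — and, in particular, disposing of the common–first–integral case, which seems to require the local nilpotence in an essential and separate way rather than the Darboux relation alone.
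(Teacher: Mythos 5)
Your opening moves reproduce the paper's exactly: the cocycle relation $\xi_{1}(h_{2})=\xi_{2}(h_{1})$, the divergence-free field $\zeta=h_{1}\xi_{2}-h_{2}\xi_{1}$, and the membership $f\zeta\in\VFzero(\An)$ are precisely the objects the paper calls $\xi$ and $f\xi$. But from there on your text is a declaration of strategy, not a proof, and the one computation that makes the argument work is missing. You try to feed hypothesis (b) with the auxiliary fields $p\,\xi_{j}$ and $f\xi_{j}$, and you correctly notice the obstruction $\Div(f\xi_{j})=h_{j}f$; the way out, which you never take, is to apply $\ad(\xi_{1})$ iteratively to $\zeta$ and $f\zeta$ \emph{themselves}, which already lie in $\VFzero(\An)$. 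Since $[\xi_{1},\xi_{2}]=0$, Lemma~\ref{adjoint.lem} gives $(\ad\xi_{1})^{k}\zeta=\xi_{1}^{k}(h_{1})\,\xi_{2}-\xi_{1}^{k}(h_{2})\,\xi_{1}$ and $(\ad\xi_{1})^{k}(f\zeta)=\xi_{1}^{k}(fh_{1})\,\xi_{2}-\xi_{1}^{k}(fh_{2})\,\xi_{1}$, so everything stays inside $\Kx\xi_{1}+\Kx\xi_{2}$ with coefficients differentiated along $\xi_{1}$. Hypothesis (b) forces both expressions to vanish for some $k$; then either one of the four coefficients is nonzero and you get a nontrivial relation $p_{1}\xi_{1}=p_{2}\xi_{2}$, hence rank $\leq 1$, or $\xi_{1}^{k}(h_{1})=\xi_{1}^{k}(h_{2})=\xi_{1}^{k}(fh_{1})=\xi_{1}^{k}(fh_{2})=0$. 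The paper kills this last case by a minimal-exponent Leibniz argument that is also absent from your proposal: $\xi_{1}(f)=h_{1}f$ gives $\xi_{1}^{k+1}(f)=0$; choosing $r,s\geq 0$ minimal with $\xi_{1}^{r+1}(h_{1})=0$ and $\xi_{1}^{s+1}(f)=0$, the Leibniz rule yields $\xi_{1}^{r+s}(h_{1}f)=\binom{r+s}{r}\xi_{1}^{r}(h_{1})\,\xi_{1}^{s}(f)\neq 0$, contradicting $\xi_{1}^{s}(h_{1}f)=\xi_{1}^{s+1}(f)=0$. Note also that your announced goal $\zeta=0$ is stronger than needed and is not what the paper establishes: the differentiated proportionality relation already bounds the rank.

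Your closing worry is, however, well placed. The Leibniz step requires a minimal $r$ with $\xi_{1}^{r}(h_{1})\neq 0$, i.e.\ $h_{1}\neq 0$ (by symmetry one may instead run the argument with $\xi_{2}$ and $h_{2}$), so the paper's proof only covers the case where some cofactor is nonzero; when $h_{1}=h_{2}=0$, i.e.\ when $f$ is a common first integral, it yields nothing --- and with the paper's definition of Darboux polynomial (which allows zero cofactor, and is used in that generality in Proposition~\ref{CommutingBasis.prop}(d)) the statement actually fails there: take $\xi_{1}=\partial_{x_{1}}$, $\xi_{2}=\partial_{x_{2}}$, $f=x_{3}$ on $\AA^{3}$, where both hypotheses hold but the module has rank $2$. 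So the degenerate case you flag as delicate is genuinely problematic and must be excluded rather than handled. Still, since your proposal neither completes the nondegenerate case nor resolves the degenerate one, it does not, as it stands, constitute a proof of the lemma.
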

\begin{proof}
We will show that there are non-zero polynomials $p_{1}, p_{2}$ such that $p_{1}\xi_{1} = p_{2}\xi_{2}$.
Set $\xi_{i}(f) = h_{i}f$. Using the formula $\Div(g\mu) = \mu(g) + g\Div(\mu)$, we see that 
$\xi : = h_{1}\xi_{2} - h_{2}\xi_{1} \in \VFzero(\An)$. Moreover, $\xi (f) = 0$, and so $f\xi \in \VFzero(\An)$. Since
$$
[\xi_{1},\xi] = [\xi_{i},  h_{1}\xi_{2}] - [\xi_{1}, h_{2}\xi_{1}] = \xi_{1}(h_{1}) \xi_{2} - \xi_{1}(h_{2}) \xi_{1},
$$
we get $(\ad\xi_{1})^{k} \xi = \xi_{1}^{k}(h_{1})\xi_{2} - \xi_{1}^{k}(h_{2})\xi_{1}$ and
$(\ad\xi_{1})^{k} (f\xi) = \xi_{1}^{k}(fh_{1})\xi_{2} - \xi_{1}^{k}(fh_{2})\xi_{1}$. Now, by assumption (b), there is a $k>0$
such that $(\ad\xi_{1})^{k} \xi = (\ad\xi_{1})^{k} (f\xi) = 0$, hence
$$
\xi_{1}^{k}(h_{1})\xi_{2} = \xi_{1}^{k}(h_{2})\xi_{1} \text{ and }\xi_{1}^{k}(fh_{1})\xi_{2} = \xi_{1}^{k}(fh_{2})\xi_{1}.
$$
Thus the claim follows except if $\xi_{1}^{k}(h_{1}) = \xi_{1}^{k}(h_{2}) = \xi_{1}^{k}(fh_{1}) = \xi_{1}^{k}(fh_{2}) = 0$. We will show that this leads to a contradiction. Since $\xi_{1}(f)=h_{1}f$, we get
$\xi_{1}^{k+1}(f)=0$. Now choose $r,s\geq0$ minimal with $\xi_{1}^{r+1}(h_{1})=0$ and $\xi_{1}^{s+1}(f)=0$. Then $\xi_{1}^{r+s}(h_{1} f) = \xi_{1}^{r} (h_{1}) \cdot\xi_{1}^{s}(f)\neq 0$. On the other hand we have  $\xi_{1}^{s}(h_{1}f) = \xi_{1}^{s+1}(f) = 0$, a contradiction. 
\end{proof}
Now we can prove the main result of this section.
\begin{proof}[Proof of Theorem~\ref{thm2}]
Let $\theta$ be an automorphism of $\VFzero(\An)$ as a Lie algebra, and put $\xi_{i}:=\theta(\di)$. Then the vector fields $\xi_{1},\ldots,\xi_{n}$ are commuting and $K$-linearly independent. Since every $\di$ acts locally nilpotently on $\VFzero(\An)$ the same holds for each $\xi_{i}$. 

We claim that the $\xi_{i}$ do not have a common \name{Darboux} polynomial. Otherwise, Lemma~\ref{noDarboux.lem} implies that $\sum_{i}\Kx \, \xi_{i}\subset \VF(\An)$ has rank 1, i.e., there exist  $\xi\in\VF(\An)$, $p_{i}\in\Kx\setminus K$ such that $\xi_{i} = p_{i}\xi$. We can assume that $\xi$ is minimal, i.e., that $\xi$ is not of the form $p\xi'$ with a non-constant polynomial $p$. 

For every $\mu$ commuting with one of the $\xi_{i}$, we get $0=[\mu,\xi_{i}] = [\mu,p_{i}\xi] = \mu(p_{i}) \xi + p_{i}[\mu,\xi]$, hence $[\mu,\xi]\in \Kx\xi$, because $\xi$ is minimal. Since $\VFzero(\An)$ is generated, as a Lie algebra, by elements commuting with one of the $\di$, this implies that $[\VFzero(\An), \xi]=[\theta(\VFzero(\An)), \xi] \subset \Kx \xi$. But $[\di,\xi]\in\Kx\xi$ implies that $[\di,\xi]=0$, hence $\xi=0$, a contradiction.

Now we use the implication (vi) $\Rightarrow$ (iii) of Proposition~\ref{CommutingBasis.prop} to see that there is an \'etale morphism $\eta\colon \An \to \An$ such that $\xi_{i}=\eta^{*}(\di)$ for all $i$. Similarly, there is an \'etale morphism $\eta'\colon \An \to \An$ such that ${\eta'}^{*}(\di) = \theta^{-1}(\di)$. It follows that the composition $\phi:=\eta'\circ\eta$ has the property that $\phi^{*}(\di) = \di$ for all $i$. Thus, by the following lemma, $\phi$ is a translation, hence $\eta$ is an isomorphism. It follows that $\Ad(\eta)\circ \theta$ is the identity on $\langle \dd{1},\ldots,\dd{n}\rangle$, and so, again by the following lemma, we finally see that $\theta = \Ad(\psi)$ for some $\psi\in\Aut(\An)$.
\end{proof}

\begin{lem} 
Let $\theta$ be an injective endomorphism of $\VFzero(\An)$ such that  $\theta(\di) = \di$ for all $i$. Then $\theta = \Ad(\tau)$ where $\tau\colon\An \simto \An$ is a translation. In particular, $\theta$ is an automorphism.
\end{lem}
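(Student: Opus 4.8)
The plan is to reduce the statement to the two finite-dimensional results already in hand, Lemma~\ref{AutAff.lem} on $\AutL(\saff_{n})$ and Lemma~\ref{endoVec.lem}, and to bootstrap from them. Everything rests on one observation: since $\theta(\dd{i})=\dd{i}$ and $\theta$ is a homomorphism, $\theta$ commutes with every operator $\ad(\dd{i})$, that is, $[\dd{i},\theta(\mu)]=\theta([\dd{i},\mu])$ for all $\mu$ and all $i$. (We may assume $n\geq 2$; for $n=1$ the algebra $\VFzero(\Aone)=K\dd{1}$ is one-dimensional, $\theta(\dd{1})=\dd{1}$ forces $\theta=\id$, and the claim is immediate.)

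First I would show that $\theta$ preserves the affine subalgebra $\saff_{n}=L(\SAff_{n})\subset\VFzero(\An)$, which as a vector space is the sum of the constant fields $\s=\langle\dd{1},\ldots,\dd{n}\rangle$ and the traceless linear fields $L(\SL_{n})$. A vector field $\xi=\sum_{m}g_{m}\dd{m}$ has coefficients of degree $\leq 1$ precisely when $[\dd{i},[\dd{j},\xi]]=0$ for all $i,j$, since each $\ad(\dd{j})$ differentiates the coefficients and $\partial_{x_{i}}\partial_{x_{j}}g_{m}=0$ for all $i,j,m$ characterizes affine $g_{m}$. For a linear field $\ell\in L(\SL_{n})$ this condition holds, so by the commutation observation it holds for $\theta(\ell)$ as well; hence $\theta(\ell)$ is affine, and since $\theta(\ell)\in\VFzero(\An)$ its linear part is traceless, giving $\theta(\ell)\in\saff_{n}$. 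Together with $\theta(\s)=\s$ this yields $\theta(\saff_{n})\subseteq\saff_{n}$. As $\theta$ is injective and $\saff_{n}$ is finite-dimensional, $\theta|_{\saff_{n}}$ is an automorphism of $\saff_{n}$.

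Next I would invoke Lemma~\ref{AutAff.lem}, which identifies $\AutL(\saff_{n})$ with $\Aff_{n}$ via $\Ad$: there is $g\in\Aff_{n}$ with $\theta|_{\saff_{n}}=\Ad(g)|_{\saff_{n}}$. Here I use that $\theta$ fixes $\s$ pointwise. By the formulas~(\ref{equ3}) the action of $\Ad(g)$ on a constant field $(a,0)\in\s$ is $(ha,0)$, where $h\in\GL_{n}$ is the linear part of $g$; fixing all of $\s$ forces $h=\id$, so $g=\tau$ is a translation. Finally, $\Ad(\tau^{-1})\circ\theta$ is an injective endomorphism of $\VFzero(\An)$ that is the identity on $\saff_{n}$, in particular on $L(\SL_{n})$, so Lemma~\ref{endoVec.lem} gives $\Ad(\tau^{-1})\circ\theta=\Ad(\lambda E)$ for some $\lambda\in K^{*}$. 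But $\Ad(\tau^{-1})\circ\theta$ fixes each $\dd{i}$ (both $\theta$ and the translation $\tau^{-1}$ fix constant fields), whereas $\Ad(\lambda E)$ multiplies $\dd{i}\in\VF(\An)_{-1}$ by $\lambda$; hence $\lambda=1$ and $\theta=\Ad(\tau)$, a translation.

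The main obstacle is the invariance step $\theta(\saff_{n})\subseteq\saff_{n}$; once it is secured, the rest is a formal descent through the two cited lemmas. The points to verify with care are that commutation of $\theta$ with the $\ad(\dd{i})$ genuinely characterizes and preserves ``affine with traceless linear part,'' and that the two residual ambiguities — the arbitrary affine $g$ allowed by Lemma~\ref{AutAff.lem} and the scalar $\lambda$ allowed by Lemma~\ref{endoVec.lem} — are killed exactly by the hypotheses that $\theta$ fixes the translations $\s$ pointwise and fixes the generators $\dd{i}$.
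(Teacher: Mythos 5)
Your proof is correct and follows essentially the same route as the paper: both arguments first show that $\theta$ preserves $L(\SAff_{n})$ (the paper cites that $L(\Aff_{n})$ is the normalizer of $L(S)$ in $\VF(\An)$, which is exactly your double-commutator criterion $[\dd{i},[\dd{j},\xi]]=0$, since the centralizer of $L(S)$ is $L(S)$), then invoke Lemma~\ref{AutAff.lem} to produce $g\in\Aff_{n}$, use the pointwise fixing of $\s$ to force $g$ to be a translation, and finally apply Lemma~\ref{endoVec.lem} and kill the scalar $\lambda$ via $\theta(\dd{i})=\dd{i}$. Your write-up is, if anything, slightly more explicit than the paper's at the invariance step (spelling out injectivity plus finite-dimensionality to get surjectivity on $\saff_{n}$), but there is no substantive difference in method.
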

\begin{proof}
We know that $\sum_{i} K \di = L(S)$ where $S \subset \Aff_{n}$ are the translations. Moreover, $L(\Aff_{n})$ is the normalizer of $L(S)$ in the Lie algebra $\VF(\An)$. Since $\theta(L(S)) = L(S)$ we get $\theta(L(\SAff_{n})) = L(\SAff_{n})$, and so, by Lemma~\ref{AutAff.lem}, there is an affine transformation $g$ such that $\Ad(g)|_{L(\SAff_{n})} = \theta|_{L(\SAff_{n})}$. Since $\Ad(g)$ is the identity on $L(S)$ we see that $g$ is a translation. It follows that $\Ad(g^{-1})\circ\theta$ is the identity on $L(\SL_{n})$, hence, by Lemma~\ref{endoVec.lem}, $\Ad(g^{-1})\circ\theta = \Ad(\lambda E)$ for some $\lambda \in K^{*}$. But $\lambda = 1$, because $\theta$ is the identity on $L(S)$. 
\end{proof}

\par\bigskip
%%%%%%%%%%%%%%%%%
%\bibliography{HP-Bib}
%\bibliographystyle{amsplain}
%\bibliographystyle{amsalpha}
%\end{document}

\end{document}